\def\vextra{\vphantom{\vrule height0.4cm width0.9pt depth0.1cm}}
\def \[{\begin{equation}}
\def \]{\end{equation}}
\def\R{\mathbb{R}}
\newtheorem{thm}{Theorem}[section]
\newtheorem{lem}{Lemma}[section]
\newtheorem{alg}{Algorithm}[section]
\newtheorem{rem}{Remark}[section]
\newtheorem{assu}{Assumption}[section]
\newtheorem{exam}{Example}[section]
\numberwithin{equation}{section}
\title{\bf An inexact Douglas-Rachford splitting method for solving absolute value equations}
\author{Cairong Chen\thanks{School of Mathematical Sciences, Beihang University, Beijing, 100191, P.R. China. This author is partially supported  by the NSFC grant 11901024 and the China Postdoctoral Science Foundation (2019M660385). Email: \texttt{cairongchen@buaa.edu.cn.}}
\hskip 0.5cm Dongmei Yu\thanks{Institute for Optimization and Decision Analytics, Liaoning Technical University, Fuxin, 123000, P.R. China. This author is supported by the China Postdoctoral Science Foundation (2019M650449). Email: {\tt yudongmei1113@163.com}}
\hskip 0.5cm Deren Han\thanks{Corresponding author. School of Mathematical Sciences, Beijing Advanced Innovation Center for Big Data and Brain Computing (BDBC), Beihang University, Beijing, 100191, P.R. China. This author is partially supported by the NSFC grants 11625105, 11926358. Email: \texttt{handr@buaa.edu.cn.}}
%\affil[a]{}
}
\begin{document}
\date{\today}
\maketitle

%\double

%\begin{center}
%%%%%%%%%%%%%%%%%%%%%%%%%摘要部分%%%%%%%%%%%%%%%%%%%%%%%%
\begin{quote}
{\bf Abstract:} The last two decades witnessed the increasing of the interests on the absolute value equations (AVE) of finding $x\in\mathbb{R}^n$ such that  $Ax-|x|-b=0$, where $A\in \mathbb{R}^{n\times n}$ and $b\in \mathbb{R}^n$.
In this paper, we pay our attention on designing efficient algorithms. To this end, we reformulate  AVE to a generalized linear complementarity problem (GLCP), which, among the equivalent forms, is the most economical one in the sense that it does not increase the dimension of the variables. For solving the GLCP, we propose  an inexact Douglas-Rachford splitting method which can adopt a relative error tolerance. As a consequence, in the inner iteration processes, we can employ the LSQR method ([C.C. Paige and M.A. Saunders, \textit{ACM Trans. Mathe. Softw. (TOMS)}, 8 (1982), pp. 43--71]) to find a qualified approximate solution for each subproblem, which makes the cost per iteration very low. We prove the convergence of the algorithm and establish its global linear rate of convergence.
Comparing results with the popular algorithms such as the exact generalized Newton method [O.L. Mangasarian, \textit{Optim. Lett.}, 1 (2007), pp. 3--8], the inexact semi-smooth Newton method [J.Y.B. Cruz, O.P. Ferreira and L.F. Prudente, \textit{Comput. Optim. Appl.}, 65 (2016), pp. 93--108] and the exact SOR-like method [Y.-F. Ke and C.-F. Ma, \textit{Appl. Math.
Comput.}, 311 (2017), pp. 195--202] are reported, which indicate that the proposed algorithm is very promising. Moreover, our method also extends the range of numerically solvable of the AVE; that is, it can deal with not only the case that $\|A^{-1}\|<1$, the commonly used in those existing literature, but also the case where $\|A^{-1}\|=1$.

{\small
\medskip
{\em 2000 Mathematics Subject Classification}. 90C33, 90C30, 65K10, 65H10, 65F10

\medskip
{\em Keywords}.
Absolute value equations; Generalized linear complementarity problems; Inexact Douglas-Rachford splitting method; Newton method; SOR-like iteration method; LSQR.
}

\end{quote}
%\end{center}
%%%%%%%%%%%%%%%%%%%%引言部分%%%%%%%%%%%%%%%%%%%%%%%%%%%%
\newpage
\section{Introduction}\label{sec:intro}
 The system of absolute value equations (AVE) is of finding a vector $x\in \mathbb{R}^n$ such that
\begin{equation}\label{eq:ave}
    Ax-|x|-b=0,
\end{equation}
where $A\in \mathbb{R}^{n\times n}, b\in \mathbb{R}^n$, and $|x|$ denotes the component-wise absolute value of the vector $x \in \mathbb{R}^n$ whose $i$-th value is $x_i$ if $x_i\ge 0$ and $-x_i$ otherwise. The AVE \eqref{eq:ave} is a special case of the more general AVE
\begin{equation}\label{eq:gave}
 Ax +B|x|=b
\end{equation}
with $A, B\in \mathbb{R}^{m \times n}$, which is introduced in \cite{rohn2004} and further investigated in \cite{mang2007,mang2009,prok2009} and references therein. On the other hand, if $B$ is nonsingular,
then~\eqref{eq:gave} can be reduced to the form of \eqref{eq:ave}. For simplicity, we focus our attention on
\eqref{eq:ave}.

Due to the combinatorial character introduced by the absolute operator, determining the existence of a solution to the AVE is NP-hard  \cite{mang2007}.  Moreover, if AVE is solvable, checking whether the AVE has a unique solution or multiple solutions is NP-complete \cite{prok2009}. A remarkable and commonly used result on the solvability of the AVE \eqref{eq:ave} is established in \cite{mame2006}, which stated  that if $\|A^{-1}\|<1$ (or equivalently, $\sigma_{\min}(A)>1$), then the AVE~\eqref{eq:ave} has a unique solution $x^*$ for any $b \in \mathbb{R}^{n}$; here and throughout the paper, $\|\cdot\|$ of a matrix denotes the $2$-norm of the matrix  and $\sigma_{\min}(A)$ denotes the smallest singular value of~$A$.

Under some assumption, the  AVEs \eqref{eq:ave} and \eqref{eq:gave} are equivalent to the linear complementarity problem (LCP)  of finding a vector $u\in \mathbb{R}^l$ such that
\begin{equation}\label{eq:lcp}
u\ge 0, \quad Mu+q\ge 0 \quad \hbox{and}\quad \left\langle u,Mu+q \right\rangle=0.
\end{equation}
One of such reformulations is from \cite[Proposition~3.1]{prok2009}, where the AVE~\eqref{eq:ave} is reformulated to~\eqref{eq:lcp} with
\[\label{prok}
M=\left[\begin{array}{ccc}
-I & 2I & 0\\
A & -(I+A) & 0\\
-A & A+I &0
\end{array}\right].
%\in\mathbb{R}^{3n\times 3n}
\]
Note that here $l=3n$, i.e., the dimension of the variable in {\color{blue} LCP~\eqref{eq:lcp} with~\eqref{prok} } is three times that in the original problem
\eqref{eq:ave}. Then, \cite{mame2006} and \cite{mang2007} propose new LCP reformulations:
\[\label{lcps}
u\ge 0,\quad (A+I)(A-I)^{-1}u+q\ge0 \quad \hbox{and}\quad \left\langle u,(A+I)(A-I)^{-1}u+q\right\rangle=0.
\]
In~\cite{mame2006},
 $$q=\left[(A+I)(A-I)^{-1}-I\right]b, \quad u=(A-I)x-b;$$
 and  in~\cite{mang2007},
$$ q=(I-A)^{-1}b, \quad u=\frac{1}{2}((A-I)x-b).$$
While the reformulation \eqref{lcps} does not increase the dimension of the variable, it limits to the case where
$1$ is not an eigenvalue of $A$.
%but the dimension of the LCP is three times of that of the original AVE~\eqref{eq:ave}.
Recently, another complementarity reformulation is proposed in \cite{abhm2018}:
\begin{equation*}%\label{lcp2018}
\left\{\begin{array}{l}A(x^+-x^-)-(x^++x^-)=b\\
x^+\ge 0, x^-\ge 0\\
\langle x^+,x^-\rangle=0.\end{array}
\right.
\end{equation*}

LCP subsumes many mathematical programming problems and such equivalent reformulations attract much attention from optimization area. Mangasarian \cite{mang2007, mang2007a} suggest to solve AVEs \eqref{eq:ave} and \eqref{eq:gave} via concave minimization and propose to solve it by a finite succession of linear programs. The concave minimization problem
of \eqref{eq:gave} is
\begin{equation}\label{concave}
\begin{array}{crll}
\displaystyle\min_{(x,t,s)\in\R^{n+n+m}}&\epsilon(-\langle e,|x|\rangle+\langle e,t\rangle)+\langle e,s\rangle&&\\
\hbox{such\;that}&-s  \le Ax+Bt-b \le s,&&\\
&-t \le x \le t.&&\end{array}
\end{equation}
Though there is no theoretical guarantee, it found that the algorithms terminate at a local minimum that solves the  AVE in almost all solvable random problems tested in the paper. Utilizing the equivalent  reformulation \eqref{lcps} for the case where $1$ is not an eigenvalue of $A$, \cite{maer2018} studies AVE and proposes a dynamical model, whose stability and global convergence were proved under suitable conditions. For the special case of \eqref{eq:ave} where $A$ is symmetric and positive definite, Noor \cite{noin2012} reformulates it as an optimization problem and proposes a Gauss-Seidel method.

The AVEs \eqref{eq:ave} and \eqref{eq:gave} can be  viewed as  special cases of the nonlinear equation
\begin{equation*}%\label{ne}
F(x)=0,
\end{equation*}
where $F$ a nonsmooth mapping from $\R^n$ to $\R^n$ for  \eqref{eq:ave} and from $\R^m$ to $\R^n$ for \eqref{eq:gave}, respectively. From this point of view,  numerical algorithms for solving nonsmooth equations can be adopted to
 find a solution of the AVEs. In \cite{mang2009}, a generalized Newton method is directly utilized to solve \eqref{eq:ave} and the iteration scheme is
 \[\label{en}
  \left[A-D(x^k)\right]x = b,\]
 where $D(x) \doteq \textbf{diag}(\textbf{sign}(x))$ with $\textbf{sign}(x)$ denoting a vector with components equal to $-1,\, 0$ or $1$, respectively, depending on whether the corresponding component of the vector $x$ is negative, zero or positive; and for $x \in \mathbb{R}^{n}$, $\textbf{diag}(x)$ representing a diagonal matrix with $x_i$ as its diagonal entries for every $i=1,2,\cdots,n$.
The reported numerical results are much promising. Under the conditions that if $\|A^{-1}\|<1/3$ and $\|D(x^k)\|\ne 0$, the exact semi-smooth Newton method is well defined and linearly converges  to the unique solution $x^*$ of the AVE~\eqref{eq:ave}.  Then, a smoothing Newton method is proposed in \cite{caqz2011}, and under suitable conditions on the smoothing function, it proves that the method is convergent and the rate of convergence can be quadratic.

 To alleviate the burden of solving system of linear equation during the  Newton iteration~\eqref{en}, the inexact semi-smooth Newton method is proposed~\cite{crfp2016}. Instead of solving the linear equation exactly as in Newton's method \eqref{en}, it only requires to find $x \in\R^n$ such that
 \begin{equation*}%\label{in}
               \left[A-D(x^k)\right]x = b+r_k,\quad \text{where} \quad \left\|  r_k\right\|
               \le \theta \left\|Ax^k-|x^k|-b\right\|.
            \end{equation*}
As stated in~\cite{crfp2016}, if $\|A^{-1}\|<1/3$ and
 \begin{equation}\label{eq:theta}
               0\le \theta < \frac{1-3\|A^{-1}\|}{\|A^{-1}\|(\|A\|+3)},
 \end{equation}
then the inexact semi-smooth Newton method is well defined and globally converges to the unique solution $x^*$ of the AVE~\eqref{eq:ave}, and its rate of convergence is $Q$-linearly.

Gauss-Seidel and SOR methods are classical numerical algorithms for solving linear equations \cite{saad2003}. A generalized Gauss-Seidel iteration method is proposed in \cite{edhs2017}, and sufficient conditions guaranteeing global convergence are also presented.
Reformulating AVE \eqref{eq:ave} as
\begin{equation*}
\left\{\begin{array}{l}
Ax-y=b,\\
y-|x|=0,
\end{array}\right.
\end{equation*}
and then define
\begin{align*}
\boldsymbol{Az}\doteq
\left(\begin{array}{cc}
A    &   -I \\
-D(x) &   I
\end{array}\right)
\left(\begin{array}{c}
x    \\   y
\end{array}\right)
=
\left(\begin{array}{c}
b    \\  0
\end{array}\right)
\doteq\boldsymbol b.
\end{align*}
By splitting the coefficient matrix $\boldsymbol{A}= \mathcal{D} - \mathcal{L} - \mathcal{U} $ with
\begin{align*}
\mathcal{D}=
\left(\begin{array}{cc}
 A   &   0 \\
 0   &   I
\end{array}\right),\quad
\mathcal{L}=
\left(\begin{array}{cc}
0     &   0 \\
D(x)   &   0
\end{array}\right),\quad
\mathcal{U}=
\left(\begin{array}{cc}
0      &   I \\
0      &   0
\end{array}\right),
\end{align*}
a SOR-like iteration method  can be derived
\begin{equation}\label{eq:equiv}
\left(\begin{array}{c}
 x^{(k+1)}   \\   y^{(k+1)}
\end{array}\right)
=\mathcal{M}_\omega
\left(\begin{array}{c}
 x^{(k)}   \\   y^{(k)}
\end{array}\right)
+\omega(\mathcal{D} - \omega \mathcal{L})^{-1}
\left(\begin{array}{c}
 b    \\  0
\end{array}\right),
\end{equation}
where $\mathcal{M}_\omega = (\mathcal{D} - \omega \mathcal{L})^{-1}\left[(1-\omega \mathcal{D})+\omega\mathcal{U}\right]$, and $\omega>0$ is the iteration parameter. For judiciously selected parameter $\omega$, \cite{kema2017} proves its convergence and \cite{chyh2020} suggests a scheme on choosing the optimal parameter.

%The SOR-like iteration method is another type of methods that receive much attention.
%linearly convergent if one of the three conditions $(2.10)$--$(2.12)$ in \cite{chyh2020} holds. As shown in  \cite{chyh2020}, the bigger value of $\|A^{-1}\|$ ($<1$) is, the smaller convergence range of $\omega$ will be. Unlike the exact semi-smooth Newton method, each SOR-like iteration step requires the exact solution of the linear system of equations with constant coefficient matrix~$A$ (see~\eqref{eq:sorl}). Thus, one can  decompose $A$ (for example, the LU decomposition) once and use it iteratively to reduce the computational cost and save the CPU time. However, the SOR-like iteration method needs an auxiliary sequence $\{y^k\}$ in order to obtain the approximate solution $x^*$ while our exact method (see~\eqref{eq:iterI}) proposed later does not. In particular, if $A$ is tridiagonal, the flop count for the $O(n)$ decomposition algorithm is $3(n-1)$~\cite[Chapter~13]{ford2014} and each step of the SOR-like iteration and our exact method requires $13n-3$ flops and $10n-1$ flops, respectively. Thus, at least in this case, the cost of each step of our exact method is less than that of the SOR-like iteration method. Our numerical results in example~\ref{ex:sorvsdrs} will confirm it.

Note that the widely used sufficient condition $\|A^{-1}\|<1$, ensuring that the AVE \eqref{eq:ave} possesses a unique solution, is derived from the equivalent reformulation of \eqref{eq:ave} as a LCP~\eqref{lcps} \cite{mame2006}. As a consequence, algorithms solving \eqref{eq:ave} via solving \eqref{lcps} inevitably involve computation of $(A-I)^{-1}$. In this paper, we first give a new look on the condition, where we derive a new sufficient condition ensuring the uniquely solvable of the AVE~\eqref{eq:ave}. While the new condition is stronger than the popular one, i.e., $\|A^{-1}\|<1$,  it provides some useful insights of the problem.

Our main concern is to  develop numerical algorithms for solving the AVE~\eqref{eq:ave}, which has two main features. That is, for the case where $ \|A^{-1}\|<1$, the new algorithm is more efficient. More importantly, it  can deal with the case that  $\|A^{-1}\|= 1$. Hence, we need consider two possible cases, i.e., the solution set of the AVE \eqref{eq:ave} is nonempty and it does not possess a solution. For the first case, we prove that the new algorithm converges in a global linear rate to a solution. And for the second case that the solution set is empty, we prove that the generated sequence diverges to infinity, providing a numerical checking method for the existence of a solution.

%From Algorithm~\ref{alg:newton}, one can find that each semi-smooth Newton iteration requires the exact solution of the linear system of equations \eqref{eq:newton} and the coefficient matrix might vary with $x^k$ (since $D(x^k)$ varies with $x^k$), which may make it computational expensive and not competitive in terms of CPU time. An alternative way to improve the effectiveness of the Newton method  is to solve the linear systems involved approximately, as shown in Algorithm~\ref{alg:ienewton}. In particular, if $\theta = 0$, then Algorithm~\ref{alg:ienewton} becomes Algorithm~\ref{alg:newton}. ; and if $\|A^{-1}\|<\frac{1}{3}$ and $\|D(x^k)\|\ne 0$, the exact semi-smooth Newton method is well defined and globally converges linearly to the unique solution $x^*$ of the AVE~\eqref{eq:ave}.

For the general case considered in this paper, i.e., $ \|A^{-1}\|\le 1$, we can not convert the  AVE~\eqref{eq:ave}  to the LCP~\eqref{lcps}, due to the possible singularity of $A-I$. On the other hand, { if we convert the  AVE~\eqref{eq:ave} to
\eqref{eq:lcp} with~\eqref{prok} or \eqref{concave}, the scale will increase}.
Here, we are interested in its another reformulation of finding $x\in \mathbb{R}^n$ such that
\begin{equation}\label{eq:glcp}
    Q(x)\doteq Ax+x-b\ge 0,\quad  F(x)\doteq Ax-x-b\ge 0 \quad \hbox{and}\quad \left\langle  Q(x), F(x)\right\rangle=0.
\end{equation}
The problem \eqref{eq:glcp} is a generalized linear complementarity problem (GLCP) \cite{fapa2007}, and
the equivalence between the AVE~\eqref{eq:ave} and the GLCP~\eqref{eq:glcp} is proved in \cite{mame2006}.
GLCP~\eqref{eq:glcp} is a special case of the general variational inequalities (GVI) problem of
\begin{equation}\label{eq:gvi}
\text{find an}~x^*,~ \text{such that}~Q(x^*)\in \Omega, ~ \left\langle v-Q(x^*),F(x^*)\right\rangle\ge 0,\; \forall\, v\in \Omega,
\end{equation}
with $\Omega=\{x\in\mathbb{R}^n|x\ge 0\}$. When $Q(x)\equiv x$, the GVI problem~\eqref{eq:gvi} reduces to the classical VI problem
\begin{equation}\label{eq:vi}
\text{find an}~x^*\in \Omega,~ \left\langle v-x^*,F(x^*)\right\rangle\ge 0, \; \forall \, v\in \Omega.
\end{equation}
The VI problem~\eqref{eq:vi} has important applications in many fields including economics, operations research and nonlinear analysis and a survey of methods for solving it can be found in \cite{hapa1990,cops1992,fapa2007}. %Comparing with other methods, the inexact operator splitting methods are advantageous for solving the VI and GVI problems \cite{han2007,he1999}. Hence, a natural question is arising, i.e., can we extend the inexact operator splitting methods to solve the AVE~\eqref{eq:ave}? Fortunately, our work here implies that the answer is positive.

Our new algorithm is inspired by the operator splitting methods for solving the generalized equation
\begin{equation*}%\label{tx}
0\in T(x),\end{equation*}
where $T:\R^n\rightrightarrows\R^n$ is a set-valued mapping and has the form $T:=\Theta+\Phi$, and $\Theta+\Phi$ are two maximal monotone mappings. We focus on the recently resurrected Douglas-Rachford splitting method \cite{dora1956}, whose recursion is very simple: at the $k$th {iteration}, the next {iteration} is generated via solving the linear equation
\[\label{h1}
2Ax-2Ax^k+\gamma\rho(x^k)G^{-1}(Ax^k-|x^k|-b)=0,\]
where $G$ is a symmetric positive definite matrix taking the role of preconditioner, and $\rho(x^k)$ is a parameter which can be evaluated easily. To reduce the computational cost per iteration, we allow to solve the equation \eqref{h1} approximately and the accuracy criterion is constructive. We prove that, if the solution set of the AVE \eqref{eq:ave}, denoted by $S^*$, is nonempty, then the algorithm converges globally to a solution, and the rate of convergence is linear; if $S^*=\emptyset$, then the generated sequence diverges to infinity; hence our algorithm naturally possesses the ability of numerically checking the existence of solutions of AVE \eqref{eq:ave} (not necessary unique).

%Our work here is inspired by the connection of the AVE~\eqref{eq:ave} and the GVI problem~\eqref{eq:gvi} and by the recent studies on the monotone GVI problem \cite{he1999} and the strongly monotone VI problem \cite{han2007}. The goals of this paper are threefold: to give a new sufficient condition and a new look on the sufficient condition $\sigma_{\min}(A)>1$ for the AVE~\eqref{eq:ave} being uniquely solvable for any $b\in\mathbb{R}^n$ (from the view of Banach fixed-point theorem), to extend the operator splitting method for the GVI problem \cite{he1999} to solve the AVE~\eqref{eq:ave} and to present an inexact implementation of it and analyze its convergence.  The inexact implementation is important, because in the original operator splitting method, each step requires the exact solution of a linear system with constant coefficient matrix $A$ and it may be prohibitively difficult or even impossible (like the SOR-like iteration method, though we can use the decomposition of $A$ to improve the performance of the exact one, it will be expensive or impossible to obtain the decomposition when $n$ is large enough). At convergence, our method only output the needed solution $x^*$, which will be superior to the SOR-like iteration method in terms of CPU time.

The rest of this paper is organized as follows. In Section~\ref{sec:pre} we present some notations, classical definitions and summarize some preliminary results, relevant to our later developments. Section~\ref{sec:newlook} gives a new sufficient condition and a new look on the sufficient condition $\sigma_{\min}(A)>1$ for the AVE~\eqref{eq:ave} being uniquely solvable for any $b\in\mathbb{R}^n$.  Section~\ref{sec:met} describes the methods and shows their contractive properties and proves the convergence of the methods. In Section~\ref{sec:numer}, three numerical examples are given to demonstrate our claims. Finally, some concluding remarks are given in Section~\ref{sec:con}.

\section{Preliminaries}\label{sec:pre}
In this section, we summarize some  notations, classical definitions and  auxiliary results which will be used  throughout the paper.

{We use $\mathbb{R}^{n\times n}$ ($\mathbb{C}^{n\times n}$) to denote the set of all $n \times n$ real (complex) matrices and $\mathbb{R}^{n}= \mathbb{R}^{n\times 1}$ ($\mathbb{C}^{n}= \mathbb{C}^{n\times 1}$). $I$ is the identity matrix with suitable dimension. $| \cdot |$ denotes absolute value for real scalar or modulus for complex scalar. We denote by $\lambda_{\max}(A)$ and $\lambda_{\min}(A)$ the largest eigenvalue and the smallest eigenvalue of $A$, respectively. The transposition of a matrix or vector is denoted by $\cdot ^T$. The inner product of two vectors in $\mathbb{R}^n$ is defined as $\langle x, y\rangle\doteq x^Ty= \sum\limits_{i=1}^n x_i y_i$ and $\| x \|\doteq\sqrt{\langle x, x\rangle} $ denotes the $2$-norm of vector $x\in \mathbb{R}^{n}$.
%$\|A\|$ denotes the spectral norm of $A$ and is defined by the formula $\| A \|\doteq \max \left\{ \| A x \| : x \in \mathbb{R}^{n}, \|x\|=1 \right\}$.
$\textbf{tridiag}(a, b, c)$ denotes a matrix that has $a, b, c$ as the subdiagonal, main diagonal and superdiagonal entries in the matrix, respectively. The projection mapping from $\mathbb{R}^n$ onto $\Omega$, denoted by $P_{\Omega}$, is defined as $P_{\Omega}[x]=\arg\min\{\|x-y\|:y\in \Omega\}$. $\emptyset$ denotes the empty set.
%For a nonempty set $S\subseteq \mathbb{R}^n$, $\textbf{dist}(x,S)\doteq \inf\{\|x-a\|: a\in S\}$.

A complex scalar $\lambda$ is called an eigenvalue of the square matrix $A$ if a nonzero vector $u\in \mathbb{C}^n$ exists such that $Au=\lambda u$. The set of all the eigenvalues of $A$ is called the spectrum of $A$ and is denoted by $\lambda(A)$. The square roots of the eigenvalues of $A^TA$ are called singular values of a real matrix $A$, and is denoted by $\sigma(A)$. The maximum modulus of the eigenvalues of $A$ is called the spectral radius and is denoted by $\rho(A)\doteq \max\limits_{\lambda\in \lambda(A)}|\lambda|$. It is easy to see that $\rho(A)\le \|A\|$ for an arbitrary square matrix $A$; see for instance \cite[Page~22]{saad2003}. Matrix $A$ is symmetric if $A^T=A$. A matrix $A\in \mathbb{R}^{n\times n}$ is said to be positive definite or positive real if $\langle Au, u\rangle>0$ for all $0\ne u\in \mathbb{R}^n$; see for example \cite[Page~29]{saad2003}; and a symmetric positive real matrix is said to be symmetric positive definite. $A\in \mathbb{R}^{n\times n}$ is positive real if and only if the symmetric part of $A$ is symmetric positive definite; in addition, if  $A\in \mathbb{R}^{n\times n}$ is positive real  then any eigenvalue $\lambda$ of $A$ satisfies $\textit{Re}(\lambda)>0$, where $\textit{Re}(\lambda)$ denotes the real part of $\lambda$; see for example \cite[Pages~30-31]{saad2003}. A matrix $A\in \mathbb{R}^{n\times n}$ is said to be a $P$-matrix if all its principal minors are positive~\cite[Page~147]{cops1992}. It is well known that a symmetric matrix is positive definite if and only if it is a $P$-matrix.
}

Let $F$ and $Q$ be continuous mappings from $\R^n$ to $\R^n$. A generalized complementarity problem, denoted by GCP($F,Q$) is finding $x\in\R^n$ such that
\[\label{gcp}
0\le F(x)\bot Q(x)\ge 0,\]
where for two vectors $u,v$ with the same dimension, the symbol `$u\bot v$' denotes $\langle u,v\rangle=0$.
If both $F$ and $Q$ are linear mappings, we call \eqref{gcp} a generalized linear complementarity problem and is denoted by  GLCP($F,Q$).  The GCP \eqref{gcp} is monotone if
  $$[Q(u)-Q(v)]^T [F(u)-F(v)]\ge 0,  \;  \forall \,u, v \in \mathbb{R}^n;$$
%In particular, the GLCP \eqref{eq:glcp} is strongly monotone if there is a $\mu>0$ such that
and it is strongly monotone if there is a $\mu>0$ such that
 $$[Q(u)-Q(v)]^T [F(u)-F(v)]\ge \mu \|u-v\|^2,  \; \forall \,u, v \in \mathbb{R}^n.$$
The operator $Q + F$ defined as in \eqref{gcp} is nonsingular (as defined by He \cite{he1999}) if there is a $\nu>0$ such that
  $$ \left\|(Q+F)(u-v)\right\|\ge \nu \|u-v\|, \;  \forall \,u, v \in \mathbb{R}^n.$$
%The definition of the (strongly) monotone property of the GLCP~\eqref{eq:glcp} is  inherent or extended from those of the GVI and VI problems \cite{he1999,paya1995,han2007}.
Recall that for the GLCP($F,Q$) \eqref{eq:glcp} arises from the AVE \eqref{eq:ave}, we have
$$ F(x)\doteq Ax-x-b\qquad\hbox{and}\qquad { Q(x)\doteq Ax+x-b}.$$
Consequently,
  \begin{align*}
    [Q(u)-Q(v)]^T [F(u)-F(v)]
        & =[ A(u-v)+(u-v)]^T [A(u-v)-(u-v)] \\
        &=(u-v)^T A^T A(u-v)-(u-v)^T A^T(u-v)\\
           &\quad + (u-v)^T A (u-v) -(u-v)^T (u-v)\\
            &=(u-v)^T(A^T A-I)(u-v),
\end{align*}
implying that the GLCP \eqref{eq:glcp} is monotone if and only if $ \sigma_{\min} (A)\ge 1$ or equivalently $\|A^{-1}\|\le 1$; and  it is strongly monotone if and only if $ \sigma_{\min} (A)> 1$ or equivalently $\|A^{-1}\|< 1$.
In addition,
\begin{align*}
\|(Q+F)(u-v)\|^2&= \|(Q+F)(u)-(Q+F)(v)\|^2\\
&= \|2Au-2b-(2Av-2b)\|^2\\
&= 4(u-v)^TA^TA(u-v)\\
&\ge 4\sigma_{\min}^2(A)\|u-v\|^2,
\end{align*}
and the operator $Q + F$ defined as in \eqref{eq:glcp} is nonsingular if and only if $A$ is nonsingular.

%Now we can conclude that, under the assumption~\eqref{eq:assump}, the GLCP \eqref{eq:glcp} is monotone and the operator $Q + F$ defined as in \eqref{eq:glcp} is nonsingular, which are useful properties in the forthcoming analysis.

{ The GCP($F,Q$)} is a special case of the  generalized variational inequality problem GVI($F,Q;\Omega$) of finding a vector $x^*\in\R^n$ such that
\[\label{gvi}
~Q(x^*)\in \Omega, ~ \left\langle v-Q(x^*),F(x^*)\right\rangle\ge 0,\; \forall\, v\in \Omega,\]
where $\Omega$ is a nonempty closed subset of $\R^n$; and GCP($F,Q$) corresponds to the special case where  $\Omega=\{x\in \mathbb{R}^n|x\ge0\}$. For a nonempty closed subset $\Omega$ of $\R^n$, a fundamental property of the projection from $\R^n$ onto $\Omega$ is that
\[\label{pp}
\langle u-w, v-w\rangle \le 0, \; \forall u\in\R^n, \;\forall\, v, w\in\Omega,\]
holds if and only if $w=P_\Omega[u]$. Using this property, we can find that GVI($F,Q;\Omega$) \eqref{gvi}
 is equivalent to the following nonsmooth projection equation
\begin{equation}\label{eq:pe}
Q(x^*) = P_{\Omega}\left[Q(x^*)-F(x^*)\right].
\end{equation}
%
%In the following, according to \eqref{eq:glcp}, we will show that the AVE~\eqref{eq:ave} is equivalent to the following nonsmooth projection equation
%\begin{equation}\label{eq:pe}
%Q(x) = P_{\Omega}\left[Q(x)-F(x)\right].
%\end{equation}
Let
\begin{equation}\label{eq:res}
e(x)\doteq Q(x) - P_{\Omega}\left[Q(x)-F(x)\right]
\end{equation}
be the residual of the equation \eqref{eq:pe}.
%
%where. The equivalence plays a critical role in the argument that follows. The first part of next theorem was well known in the background of GVI problem \cite{he1999}. However, it seems to be the first time that the second part of the theorem is explicitly formulated. Let
%\begin{equation}\label{eq:res}
%e(x)\doteq Q(x) - P_{\Omega}\left[Q(x)-F(x)\right]
%\end{equation}
%be the residual of the equation \eqref{eq:pe}.
Now we are in position to prove the following basic theorem.

\begin{thm}\label{thm:aveequivpe}
$x^*$ is a solution of the AVE~\eqref{eq:ave} if and only if $e(x^*)=0$. Furthermore, we have
\[\label{ex}e(x)=Ax-|x|-b.\]
\end{thm}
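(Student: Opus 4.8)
The plan is to establish the explicit formula \eqref{ex} first, from which the claimed equivalence follows at once. Recall from \eqref{eq:res} that $e(x)=Q(x)-P_\Omega[Q(x)-F(x)]$ with $Q(x)=Ax+x-b$ and $F(x)=Ax-x-b$. The first observation is that the argument of the projection collapses to a multiple of $x$, since
\begin{equation*}
Q(x)-F(x)=(Ax+x-b)-(Ax-x-b)=2x.
\end{equation*}
Thus $e(x)=Q(x)-P_\Omega[2x]=Ax+x-b-P_\Omega[2x]$, and the whole computation reduces to evaluating the projection onto the nonnegative orthant.

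Next I would use that $\Omega=\{x\in\R^n\mid x\ge 0\}$ is a closed convex cone, so that its projection acts entrywise as the positive-part operator and is invariant under positive scaling. Writing $x^+\doteq\max\{x,0\}$ and $x^-\doteq\max\{-x,0\}$ componentwise, this gives $P_\Omega[2x]=2x^+$; if a fully self-contained justification is wanted, one confirms via the projection characterization \eqref{pp} that $x^+$ is indeed the Euclidean projection of $x$ onto $\Omega$. Substituting yields $e(x)=Ax+x-b-2x^+$.

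The key algebraic identity is then $2x^+-x=|x|$, which follows from the elementary entrywise decompositions $x=x^+-x^-$ and $|x|=x^++x^-$; indeed $2x^+-x=2x^+-(x^+-x^-)=x^++x^-=|x|$. Hence $x-2x^+=-|x|$, and substituting into the expression above gives
\begin{equation*}
e(x)=Ax-b+(x-2x^+)=Ax-|x|-b,
\end{equation*}
which is exactly \eqref{ex}. The stated equivalence is now immediate: since $e(x)=Ax-|x|-b$ holds identically, we have $e(x^*)=0$ precisely when $Ax^*-|x^*|-b=0$, i.e. precisely when $x^*$ solves the AVE \eqref{eq:ave}.

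I do not expect a genuine obstacle here, as the whole argument is a short direct computation; the only point requiring a little care is the projection identity $P_\Omega[2x]=2x^+$ together with the decompositions $x=x^+-x^-$ and $|x|=x^++x^-$. An alternative, less computational route would chain the already-cited equivalences, namely AVE $\iff$ GLCP \eqref{eq:glcp} (from \cite{mame2006}) and GVI $\iff$ projection equation \eqref{eq:pe}, but the direct evaluation above is cleaner and has the advantage of delivering the explicit residual formula \eqref{ex} at the same time.
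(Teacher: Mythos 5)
Your proof is correct, and its computational core is the same as the paper's: both evaluate $P_\Omega[Q(x)-F(x)]=P_\Omega[2x]$ on the nonnegative orthant, and your identity $P_\Omega[2x]=2x^+$ combined with $2x^+-x=|x|$ is precisely the paper's formula $P_\Omega[x]=\max\{x,0\}=(|x|+x)/2$ in different notation. Where you genuinely differ is in the logical architecture. The paper proves the equivalence part \emph{first}, following Theorem~1 of \cite{he1999}: it uses the projection characterization \eqref{pp} to show that $e(x^*)=0$ holds iff $\langle v-Q(x^*),F(x^*)\rangle\ge 0$ for all $v\in\Omega$, i.e.\ iff $x^*$ solves the GVI/GLCP \eqref{eq:glcp}, and then relies (implicitly) on the AVE--GLCP equivalence established in \cite{mame2006} to pass to the AVE \eqref{eq:ave}; the explicit formula \eqref{ex} is derived only afterwards. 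You invert the order: you derive \eqref{ex} by direct computation and then obtain the equivalence as a triviality, since $e(x^*)=0$ literally \emph{is} the equation $Ax^*-|x^*|-b=0$. Your route is more elementary and self-contained --- it needs neither the argument of \cite{he1999} nor the cited equivalence of \cite{mame2006}, and in fact it yields that equivalence as a byproduct when combined with the projection-characterization step. What the paper's ordering buys is the explicit display of the GVI mechanism behind $e(\cdot)$, which is the viewpoint reused later in the paper (e.g.\ in Theorem~\ref{thm:contrac}, where $x^*$ is handled as a GLCP solution), so the two proofs emphasize different things while sharing the same short calculation.
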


\begin{proof}
The first part of this theorem can be similarly proved in the light of that of Theorem~1 in \cite{he1999}. In fact, it follows from \eqref{eq:pe} and the property~\eqref{pp} of the projection operator that
$$
\begin{array}{rcl}e(x^*)=0&\Longleftrightarrow&  { Q(x^*) = P_{\Omega}\left[Q(x^*)-F(x^*)\right]}\\
&\Longleftrightarrow& \langle v-Q(x^*),Q(x^*)-(Q(x^*)-F(x^*))\rangle\ge 0,\;\forall \, v\in\Omega\\
&\Longleftrightarrow& \langle v-Q(x^*),F(x^*)\rangle\ge 0,\;\forall\, v\in\Omega.
\end{array}
$$
For the second assertion, we have that for $\Omega=\{x\in \mathbb{R}^n|x\ge0\}$,
$$
P_{\Omega}[x]=\max\{x,0\}=(|x|+x)/2.$$
Hence, it follows from the definitions of $Q$ and $F$ in \eqref{eq:glcp} that
\begin{align*}
e(x)&= Q(x) - P_{\Omega}\left[Q(x)-F(x)\right]\\
&=Ax + x -b-P_{\Omega}\left[Ax + x -b-(Ax - x -b)\right]\\
&=Ax + x -b-P_{\Omega}\left[2 x \right]\\
&=Ax+x-b-(|x|+x)\\
&=Ax-|x|-b,
\end{align*}
from which we complete the proof.
\end{proof}

%It follows from the above theorem that to solve the AVE~\eqref{eq:ave} is equivalent to find a zero of the residual function $e(x)$.  This provides a new perspective to develop algorithms to solve the AVE~\eqref{eq:ave} (see section~\ref{sec:met} for more detail) and also provides an important stopping criterion for designing a solution method.

We will end this section by proving the following theorem. The proof is inspired by that of Theorem~2 in~\cite{he1999}.

\begin{thm}\label{thm:contrac}
If $x^*$ is a solution of the AVE \eqref{eq:ave}, then
\begin{equation}\label{ie:contr}
{ \left\langle A(x-x^*), e(x) \right\rangle } \ge\frac{1}{2}\left\{ \left\|e(x)\right\|^2+\langle x-x^*, (A^TA-I)(x-x^*)\rangle\right\},\; \forall \,x\in \mathbb{R}^n.
\end{equation}
\end{thm}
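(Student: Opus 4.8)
The plan is to reduce the stated inequality to the non-expansiveness of the component-wise absolute value. Writing $d := x - x^*$, I would first record the identity $\langle d, (A^TA - I)d\rangle = \|Ad\|^2 - \|d\|^2$, so that the right-hand side of \eqref{ie:contr} becomes $\tfrac12\left(\|e(x)\|^2 + \|Ad\|^2 - \|d\|^2\right)$. Multiplying the target by $2$ and rearranging, the claim \eqref{ie:contr} is seen to be \emph{equivalent} to
$$\|d\|^2 \ge \|e(x)\|^2 - 2\langle Ad, e(x)\rangle + \|Ad\|^2 = \|e(x) - Ad\|^2,$$
where the last equality is merely completing the square. Thus it suffices to prove $\|x - x^*\| \ge \|e(x) - A(x - x^*)\|$.

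Next I would identify the vector $e(x) - A(x-x^*)$ explicitly. By Theorem~\ref{thm:aveequivpe} we have $e(x) = Ax - |x| - b$, and since $x^*$ solves the AVE~\eqref{eq:ave} we have $Ax^* - b = |x^*|$. Substituting,
$$e(x) - A(x - x^*) = (Ax - |x| - b) - Ax + Ax^* = (Ax^* - b) - |x| = |x^*| - |x|.$$
Hence the reduced inequality becomes $\|x - x^*\| \ge \big\||x^*| - |x|\big\|$.

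Finally, this last inequality holds entrywise: for each index $i$, the reverse triangle inequality gives $\big||x_i| - |x_i^*|\big| \le |x_i - x_i^*|$, and squaring and summing over $i$ yields $\big\||x| - |x^*|\big\|^2 \le \|x - x^*\|^2$. Reversing the chain of equivalences then establishes \eqref{ie:contr}.

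I expect the only non-routine step to be spotting the reduction in the first paragraph, namely recognizing that the quadratic combination $2\langle Ad, e(x)\rangle - \|e(x)\|^2 - \|Ad\|^2$ is exactly $-\|e(x) - Ad\|^2$; this collapses the whole estimate onto the single fact that $|\cdot|$ is $1$-Lipschitz. After that the computation is immediate and uses only Theorem~\ref{thm:aveequivpe} together with the defining relation $Ax^* = |x^*| + b$. One could alternatively follow the projection-based route of Theorem~2 in \cite{he1999}, combining the projection inequality \eqref{pp} at $x$ with the variational characterization \eqref{gvi} at $x^*$; but the direct substitution above appears shorter and avoids invoking the GVI reformulation.
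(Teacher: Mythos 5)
Your proof is correct, and it takes a genuinely different route from the paper's. The paper follows the projection-based argument of Theorem~2 in \cite{he1999}: it uses the GLCP characterization of $x^*$ (taking $v := P_\Omega[Q(x)-F(x)]$ in the variational inequality satisfied by $x^*$) together with the fundamental projection inequality \eqref{pp} (applied with $u := Q(x)-F(x)$ and $v := Q(x^*)$), adds the two resulting inequalities, and then specializes to the AVE via the identities $Q(x)+F(x)=2(Ax-b)$ and $\langle Q(x)-Q(x^*),F(x)-F(x^*)\rangle=\langle x-x^*,(A^TA-I)(x-x^*)\rangle$. You instead bypass the GVI machinery entirely: completing the square shows \eqref{ie:contr} is \emph{equivalent} to $\left\|e(x)-A(x-x^*)\right\|\le\|x-x^*\|$, the explicit formula \eqref{ex} together with $Ax^*-b=|x^*|$ identifies $e(x)-A(x-x^*)=|x^*|-|x|$, and the componentwise reverse triangle inequality finishes the job. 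Each approach buys something: the paper's proof makes clear that the contraction estimate is an instance of a general fact about monotone GVIs and projections (and hence would survive changes to $Q$, $F$, or $\Omega$), while yours is shorter, self-contained, and reveals the sharper structural fact that the entire inequality is nothing more than the $1$-Lipschitz property of the componentwise absolute value in disguise---indeed your chain of reductions is reversible, so \eqref{ie:contr} and the nonexpansiveness of $|\cdot|$ are equivalent statements here. Of course, at bottom the two arguments are cousins: the nonexpansiveness of $P_\Omega$ used implicitly by the paper and the $1$-Lipschitz property of $|x|=2P_\Omega[x]-x$ (for $\Omega$ the nonnegative orthant) encode the same fact, but your derivation exposes this directly rather than routing it through the variational inequality formalism.
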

\begin{proof} Recall that the AVE \eqref{eq:ave} is equivalent to the GLCP~\eqref{eq:glcp}; hence, $x^*$ is also a solution of GLCP($Q,F$), implying $Q(x^*)\in\Omega$ and
$$
\langle v-Q(x^*),F(x^*)\rangle \ge 0,\;\forall\, v\in\Omega,$$
where $\Omega=\{x\in \mathbb{R}^n|x\ge 0\} \subseteq \mathbb{R}^n$.
Let $x\in\R^n$ be an arbitrary vector and setting $v:= P_\Omega[Q(x)-F(x)]$ leads to
\begin{align}\label{equ7}
\langle P_\Omega[Q(x)-F(x)]-Q(x^*), F(x^*)\rangle\ge 0, \;\forall\, x\in\R^n.
\end{align}
On the other hand, setting $u:=Q(x)-F(x)$ and $v:=Q(x^*)$ we have
\begin{align}\label{equ6}
\langle Q(x)-F(x)-P_\Omega[Q(x)-F(x)], P_\Omega[Q(x)-F(x)]-Q(x^*)\rangle \ge 0, \;\forall\, x\in\R^n.
\end{align}
Adding \eqref{equ6} and \eqref{equ7}, and using \eqref{eq:res} we obtain
{ $$
\langle Q(x)-Q(x^*)-e(x), e(x)-F(x)+F(x^*)\rangle\ge 0, \;\forall\, x\in\R^n.$$ }
Rearranging terms, we have
\begin{align}\nonumber
&\left\langle \left(Q(x)-Q(x^*)\right)+\left(F(x)-F(x^*)\right), e(x)\right\rangle\\\nonumber
&\qquad \ge \left\|e(x)\right\|^2+\langle Q(x)-Q(x^*),F(x)-F(x^*)\rangle.\nonumber
\end{align}
Noting that for $Q$ and $F$ defined in \eqref{eq:glcp}, we have
\[\label{QF}
Q(x)+F(x)=2(Ax-b)\]
and
$$
\langle Q(x)-Q(x^*), F(x)-F(x^*)\rangle=\langle x-x^*,(A^TA-I)(x-x^*)\rangle,$$
the assertion then  follows immediately.
\end{proof}

%It follows from Theorem~\ref{thm:contrac} that, under the assumption~\eqref{eq:assump},
%\begin{equation*}
%\left\|e(x)\right\|\cdot \left\|A(x-x^*)\right\| \ge  e(x)^TA(x-x^*)\ge \frac{1}{2} \left\|e(x)\right\|^2,\quad \forall \;x\in \mathbb{R}^n,
%\end{equation*}
%even if $A(x-x^*)$ is still an unknown vector. In addition, since $\left\|e(x)\right\|$ measures how much $x$ fails to be in $S^*$, it is hope to find some way to obtain $x$ such that $A(x-x^*)$ is as close to zero as possible (more detail will come in theorem~\ref{thm:iterpro} later).

Our proof of the convergence of the proposed algorithms is based on  the {\it Fej$\acute{{e}}$r monotonicity} \cite[Def. 5.1]{bauschke2011convex} of the iterates with respect to the solution set. The  {Fej\'er monotonicity} of a sequence {$\{v^k\}$} with respect to a closed convex set $\Omega$ says that for two consecutive point $v^{k+1}$ and $v^k$, we have
{
\begin{equation*}%\label{Fejer}
\hbox{dist}(v^{k+1},\Omega)\le \hbox{dist}(v^{k},\Omega)% - c_0\|v^k-v^{k-1}\|^2
,\end{equation*}
}
where
$$
\hbox{dist}(v,\Omega):=\inf_{u\in\Omega} \|u-v\|$$
denotes the distance between a point $v$ and $\Omega$. %{\color{red} and $c_0$ is nonnegative constant}.
We now summarize a Fej\'{e}r monotonicity technique (see, for instance, \cite[Lemmas 1.2]{bot2019proximal}, which is useful in our  convergence analysis.

\begin{lem}\label{fejer}Let $\{\xi_n\}$ be a sequence of real numbers and $\{\omega_n\}$ be a sequence of real nonnegative numbers. Assume that $\{\xi_n\}$ is bounded from below and that, for any $n\ge 0$,
$$
\xi_{n+1}\le \xi_n-\omega_n.$$
Then the following statements hold:
\begin{enumerate}[(i)]
\item the sequence $\{\omega_n\}$ is summable, i.e., $\sum_{n=1}^\infty \omega_n<\infty$;
\item the sequence $\{\xi_n\}$ is monotonically decreasing and convergent. \qed
\end{enumerate}
\end{lem}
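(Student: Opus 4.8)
The plan is to prove Lemma~\ref{fejer} as a self-contained statement about monotone sequences of real numbers, entirely by elementary means. The hypotheses are that $\{\xi_n\}$ is bounded below and satisfies the one-step descent inequality $\xi_{n+1}\le \xi_n-\omega_n$ with each $\omega_n\ge 0$. First I would observe that since $\omega_n\ge 0$, the descent inequality immediately gives $\xi_{n+1}\le \xi_n$ for every $n$, so $\{\xi_n\}$ is monotonically nonincreasing; combined with the boundedness from below, the monotone convergence theorem yields that $\{\xi_n\}$ converges to some finite limit $\xi_\infty$. This settles part~(ii).

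For part~(i), the idea is to telescope the descent inequality. Rewriting it as $\omega_n\le \xi_n-\xi_{n+1}$ and summing over $n=0,1,\dots,N$ gives the partial-sum bound
\begin{equation*}
\sum_{n=0}^{N}\omega_n\le \sum_{n=0}^{N}(\xi_n-\xi_{n+1})=\xi_0-\xi_{N+1}.
\end{equation*}
Since $\{\xi_n\}$ is bounded below, say $\xi_n\ge c$ for all $n$, the right-hand side is bounded above by $\xi_0-c$ uniformly in $N$. Thus the partial sums of the nonnegative series $\sum\omega_n$ form a nondecreasing sequence that is bounded above, hence convergent, so $\sum_{n=1}^\infty \omega_n<\infty$.

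There is essentially no hard obstacle here; the result is a standard telescoping argument, and the only point requiring any care is indexing the sum so that the telescoping collapses cleanly. One should simply make sure the lower bound $\xi_n\ge c$ is used correctly to control $\xi_{N+1}$ from below, so that $\xi_0-\xi_{N+1}\le \xi_0-c$ provides the uniform bound on the partial sums. I would present part~(ii) first since it is immediate and its conclusion (convergence, hence in particular boundedness) also reinforces the telescoping step, and then conclude with the summability claim in part~(i).
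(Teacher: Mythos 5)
Your proof is correct: the telescoping bound $\sum_{n=0}^{N}\omega_n\le\xi_0-\xi_{N+1}\le\xi_0-c$ together with the monotone convergence theorem is exactly the standard argument for this lemma. The paper itself offers no proof (it simply cites Lemma~1.2 of the reference \cite{bot2019proximal} and marks the statement with \qed), so your self-contained elementary derivation fills that gap in precisely the expected way.
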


\section{A new look on the sufficient condition $\sigma_{\min}(A)>1$}\label{sec:newlook}
It was proved in \cite{mame2006} that the AVE~\eqref{eq:ave} is uniquely solvable for any $b\in \mathbb{R}^n$ if $\sigma_{\min}(A)>1$, or equivalently $\|A^{-1}\|<1$. The proof there was finished by first reducing the AVE~\eqref{eq:ave} to the LCP~\eqref{eq:lcp} with $M=(A+I)(A-I)^{-1}$ and then using 3.3.7 in \cite[Page~148]{cops1992}; see~\cite[Proposition~3]{mame2006} for detail. In this section, we will introduce a new sufficient condition for the AVE~\eqref{eq:ave} being uniquely solvable for any $b\in \mathbb{R}^n$ and then describe a new look on the sufficient condition $\sigma_{\min}(A)>1$ from the view of Banach fixed-point theorem {(see for example~\cite[Page~144]{fapa2007})}.

Before that, we  give an example to show that the condition $\sigma_{\min}(A)>1$ is not necessary for the AVE~\eqref{eq:ave} being uniquely solvable for any $b\in \mathbb{R}^n$. The reason is that, in general, a positive definite matrix is not equivalent to a $P$-matrix. For example, let $A=\left[\begin{array}{cc} 1& 2\\ -\frac{2}{3} &1\end{array}\right]$, then we have $\sigma_{\min}(A)=1$ and the AVE~\eqref{eq:ave} is uniquely solvable for any $b\in \mathbb{R}^n$ since $M=(A+I)(A-I)^{-1}=\left[\begin{array}{cc} 1& -3\\ 1 &1\end{array}\right]$ is a $P$-matrix.  However, this $M$ is not positive definite since $u^TMu=0$ for any { $u=[u_1,u_2]^T$ with $u_1=u_2$.} In the following, based on Theorem~\ref{thm:aveequivpe}, we will give another constructed proof for this sufficient condition by constructing a special fixed-point iteration.

Now we are in position to give a new sufficient condition for the AVE~\eqref{eq:ave} being uniquely solvable for any $b\in \mathbb{R}^n$. Let
$$
\mathcal{F}(x)= x-\nu e(x)
$$
with $\nu\neq 0$, then Theorem~\ref{thm:aveequivpe} implies that $x^*$ is a solution of the AVE~\eqref{eq:ave} if and only if $\mathcal{F}(x^*)=x^*$, that is, $x^*$ is a fixed point of $\mathcal{F}$. This provides a way to derive a sufficient condition for the AVE~\eqref{eq:ave} having a unique solution. In fact, {for any $x,\;y\in\mathbb{R}^n$}, since
\begin{align*}
\left\|\mathcal{F}(x)-\mathcal{F}(y)\right\|&=\|(I-\nu A)(x-y)+\nu (|x|-|y|)\|\\
&\le \|I-\nu A\|\cdot \|x-y\|+\nu \|x-y\|\\
&=(\|I-\nu A\|+\nu)\|x-y\|,
\end{align*}
$\mathcal{F}$ is a contraction map if $\|I-\nu A\|+\nu \in (0,1)$; here we have used the inequality $\left\||x|-|y|\right\|\le \|x-y\|$  in the first inequality.
Thus, according to the Banach fixed-point theorem, if there exists $\nu \in (0,1)$ such that $\|I-\nu A\|<1-\nu$, the map $\mathcal{F}$ has a unique fixed point in $\mathbb{R}^n$; that is, the AVE~\eqref{eq:ave} has a unique solution for any $b\in \mathbb{R}^n$. In conclusion, we have the following existence theorem.

\begin{thm}\label{thm:newsc}
The AVE~\eqref{eq:ave} is uniquely solvable for any $b\in \mathbb{R}^n$ if there exists $\nu \in (0,1)$ such that $\|I-\nu A\|<1-\nu$. \qed
\end{thm}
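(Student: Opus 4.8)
The plan is to establish the result as a direct application of the Banach fixed-point theorem, which the excerpt has essentially already set up. The key observation, coming from Theorem~\ref{thm:aveequivpe} together with the identity $e(x)=Ax-|x|-b$, is that solving the AVE~\eqref{eq:ave} is equivalent to finding a fixed point of the map $\mathcal{F}(x)=x-\nu e(x)$ for any fixed $\nu\neq 0$. Indeed, $\mathcal{F}(x^*)=x^*$ holds if and only if $\nu e(x^*)=0$, which (since $\nu\neq 0$) is equivalent to $e(x^*)=0$, i.e., to $x^*$ solving the AVE. So the entire problem reduces to verifying that $\mathcal{F}$ is a contraction on $\mathbb{R}^n$ under the stated hypothesis.

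The main estimate is the contraction bound. First I would write out $\mathcal{F}(x)-\mathcal{F}(y)$ explicitly: substituting $e(x)=Ax-|x|-b$ gives $\mathcal{F}(x)=x-\nu(Ax-|x|-b)=(I-\nu A)x+\nu|x|+\nu b$, so the constant term $\nu b$ cancels in the difference and one obtains
\[
\mathcal{F}(x)-\mathcal{F}(y)=(I-\nu A)(x-y)+\nu\bigl(|x|-|y|\bigr).
\]
Taking norms and applying the triangle inequality yields $\|\mathcal{F}(x)-\mathcal{F}(y)\|\le\|I-\nu A\|\,\|x-y\|+\nu\,\bigl\||x|-|y|\bigr\|$. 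The crucial analytic fact here is the nonexpansiveness of the absolute-value map, $\bigl\||x|-|y|\bigr\|\le\|x-y\|$, which holds componentwise (reverse triangle inequality) and hence in the $2$-norm; using it gives the Lipschitz bound $\|\mathcal{F}(x)-\mathcal{F}(y)\|\le(\|I-\nu A\|+\nu)\|x-y\|$. Under the hypothesis that there exists $\nu\in(0,1)$ with $\|I-\nu A\|<1-\nu$, the Lipschitz constant satisfies $\|I-\nu A\|+\nu<1$, so $\mathcal{F}$ is a contraction with modulus strictly less than one.

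With the contraction established, the conclusion is immediate: since $\mathbb{R}^n$ equipped with the $2$-norm is a complete metric space, the Banach fixed-point theorem guarantees that $\mathcal{F}$ has a unique fixed point $x^*\in\mathbb{R}^n$, and by the equivalence above this $x^*$ is the unique solution of the AVE~\eqref{eq:ave}. Because $b$ entered only through the (cancelling) constant term and played no role in the contraction estimate, the argument is valid for every $b\in\mathbb{R}^n$, giving unique solvability for all right-hand sides. I do not anticipate a genuine obstacle, since the preamble of this section already carries out the decisive Lipschitz computation; the only point requiring care is to state clearly why the fixed-point equation $\mathcal{F}(x^*)=x^*$ is equivalent to $e(x^*)=0$ (the role of $\nu\neq 0$) and to invoke completeness of $\mathbb{R}^n$ so that Banach's theorem applies.
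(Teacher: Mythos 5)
Your proposal is correct and follows essentially the same route as the paper: the fixed-point reformulation via $\mathcal{F}(x)=x-\nu e(x)$, the Lipschitz estimate $\|\mathcal{F}(x)-\mathcal{F}(y)\|\le(\|I-\nu A\|+\nu)\|x-y\|$ using nonexpansiveness of the absolute-value map, and the Banach fixed-point theorem on $\mathbb{R}^n$. The only difference is expository: you spell out the cancellation of the constant term $\nu b$ and the role of $\nu\neq 0$ in the equivalence $\mathcal{F}(x^*)=x^*\Leftrightarrow e(x^*)=0$, which the paper leaves implicit.
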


Note that for $\nu \in (0,1)$,
\begin{align*}
\|I-\nu A\|<1-\nu &\Longleftrightarrow \|I-\nu A\|^2 < (1-\nu)^2\\
&\Longleftrightarrow \lambda_{\max}\left((I-\nu A)^T(I-\nu A)\right) < (1-\nu)^2\\
&\Longleftrightarrow \max_{0\ne x\in\mathbb{R}^n}\frac{x^T(I-\nu A)^T(I-\nu A)x}{x^Tx}< (1-\nu)^2\\
&\Longleftrightarrow \left[ x^T(A^TA-I)x\right]\cdot \nu^2 - \left[2x^T(A-I)x\right]\cdot \nu < 0,\;\forall\;0\ne x\in \mathbb{R}^n,
\end{align*}
which is satisfied if $ x^T(A^TA-I)x>0$ and $x^T(A-I)x>0$ for all $0\ne x \in \mathbb{R}^n$ or $ x^T(A^TA-I)x<0$ and $x^T(A-I)x>0$ for all $0\ne x \in \mathbb{R}^n$. However, $ x^T(A^TA-I)x<0$ for all $0\ne x \in \mathbb{R}^n$ implies that $\|A\|<1$, thus $\rho(A)\le \|A\|<1$, which contracts to $x^T(A-I)x>0$ for all $0\ne x \in \mathbb{R}^n$. { In fact, the latter implies that $\textit{Re}(\lambda-1)>0$, that is $\textit{Re}(\lambda)>1$, where $\lambda$ is any eigenvalue of $A$,} which implies $\rho(A)>1$. Therefore, there exists $\nu \in (0,1)$ such that $\|I-\nu A\|<1-\nu$ is guaranteed if $\sigma_{\min}(A)>1$ and $A-I$ is positive real. In particular, if $A$ is symmetric positive definite, the new sufficient condition is equivalent to $\sigma_{\min}(A)>1$. However, in general, the positive real condition is necessary. For example, let {$A=\textbf{diag}([1.8,-2]^T)$}, which satisfies $\sigma_{\min}(A)>1$ but $A-I$ is not positive real, then for all $\nu \in (0,1)$, we have $\|I-\nu A\|>1-\nu$, a contradiction. In conclusion, our sufficient condition is stronger than that of \cite{mame2006}. However, it guarantees the convergence of the fixed-point contraction algorithm $x^{k+1}=\mathcal{F}(x^k),\, k=0,1,2,\cdots$, which is inverse-free. We will not further study the property of this fixed-point iteration since it is not the main subject of this paper.

Finally, we describe a new proof of the sufficient condition $\sigma_{\min}(A)>1$ which guarantees the AVE~\eqref{eq:ave} has a unique solution for any $b\in \mathbb{R}^n$. Since $A$ is nonsingular, we can conclude that $x^*$ is a solution of the AVE~\eqref{eq:ave} if and only if $\mathcal{\tilde{F}}(x^*)=x^*$ with \begin{equation}\label{eq:fix}
\mathcal{\tilde{F}}(x)=x-\nu A^{-1}e(x), \quad \nu \in (0,1).
\end{equation}
Similarly, we have
\begin{align*}
\left\|\mathcal{\tilde{F}}(x)-\mathcal{\tilde{F}}(y)\right\|&=\|(1-\nu )(x-y)+\nu A^{-1} (|x|-|y|)\|\\
&\le (1-\nu)\cdot \|x-y\|+\nu \|A^{-1}\| \|x-y\|\\
&=(1-\nu+\nu\|A^{-1}\| )\|x-y\|,
\end{align*}
from which we can conclude that if $\|A^{-1}\|<1$, $\mathcal{\tilde{F}}$ is contractive and according to the Banach fixed-point theorem, $\mathcal{\tilde{F}}(x)=x$ has a unique solution in $\mathbb{R}^n$, that is, the AVE~\eqref{eq:ave} has a unique solution for any $b\in \mathbb{R}^n$. Thus, we have provided another way to prove that if $\|A^{-1}\|<1$ the AVE~\eqref{eq:ave} has a unique solution for all $b\in \mathbb{R}^n$. In addition, from \eqref{eq:fix} we can develop the fixed-point iteration algorithm $x^{k+1}=\mathcal{\tilde{F}}(x^k),\, k=0,1,2,\cdots$ with $\nu \in (0,1)$. This algorithm is convergent if $\|A^{-1}\|<1$. Indeed, when $\nu = \frac{\gamma}{2}$, this fixed-point iteration scheme reduces to the later DRs scheme \eqref{eq:iterI}. %Thus,  here we have also proved that if $\|A^{-1}\|<1$ the DRs scheme \eqref{eq:iterI} is convergent from the point  of view of Banach fixed-point theorem.}

\section{The  methods and convergence}\label{sec:met}
In this section, we will describe our methods formally and prove their convergence.
The following assumption will be used during the convergence analysis.
\begin{assu}\label{assump} Assume that
\begin{equation}\label{assump}
\framebox{
\parbox{10.8cm}{
$\|A^{-1}\|\le 1$ and the solution set $S^*$ of the AVE~\eqref{eq:ave} is nonempty.
}
}
\end{equation}\end{assu}

This assumption includes $\|A^{-1}\|<1$ or equivalently $\sigma_{\min}(A)>1$ as a special case, from which the AVE~\eqref{eq:ave} has a unique solution for any
$b \in  \mathbb{R}^n$~\cite{mame2006}.

As stated in the previous section, to solve the AVE~\eqref{eq:ave} is equivalent to finding a zero of the residual function $e(x)$, defined as in~\eqref{eq:res}. Since $e(x)=0$ is equivalent to
$$
Q(x) +F(x) = Q(x)+F(x)-\gamma \rho(x)G^{-1}e(x)
$$
with {$\gamma\ne 0$}, $G\in \mathbb{R}^{n\times n}$ being invertible, and $\rho(x)\ne 0$ whenever $e(x)\ne 0$, we can construct the following fixed-point iteration scheme for solving the AVE~\eqref{eq:ave}:
\begin{equation*}%\label{eq:fip}
(Q+F)(x^{k+1}) = (Q+F)(x^k)-\gamma \rho(x^k)G^{-1}e(x^k).
\end{equation*}
When $\gamma \in (0,2)$, $G$ is symmetric positive definite and
\begin{equation} \label{eq:rho}
\rho(x)=\frac{\|e(x)\|^2}{e(x)^T G^{-1} e(x)},
\end{equation}
this fixed-point iteration becomes the implicit method  proposed in \cite{he1999},  which for solving the monotone GVI problem~\eqref{eq:gvi}. Essentially, it was developed from the perspective of operator splitting, and when $\gamma = 1$ and $G = I$, it reduces to the Douglas-Rachford algorithm~\cite{dora1956}. {For solving the equivalent GLCP~\eqref{eq:glcp} of the AVE~\eqref{eq:ave}},
our Douglas-Rachford splitting algorithm is as follows, where we have used \eqref{QF} to simplify the representation.

\begin{alg}\label{Alg1}{\bf $(${Exact} Douglas-Rachford splitting method for the
AVE~\eqref{eq:ave}$)$}
Given $x^0 \in \mathbb{R}^n$, $\gamma \in (0,2)$ and a symmetric positive definite matrix $G\in \mathbb{R}^{n\times n}$.
For $k=0,1,\cdots,$ if $x^k \notin S^*$, then
\begin{equation} \label{eq:subproblem}
x^{k+1} \quad  \text{solves} \quad \Theta_k(x)=0,
\end{equation}
where
\begin{equation} \label{eq:Theta}
\Theta_k(x)=2Ax-2Ax^k+\gamma\rho(x^k)G^{-1}(Ax^k-|x^k|-b)
\end{equation}
and $\rho(x)$ is defined as in \eqref{eq:rho}.
\end{alg}

In Algorithm~\ref{Alg1}, each iteration step requires the exact solution of the subproblem \eqref{eq:subproblem}. Indeed, from \eqref{eq:Theta} we have
\begin{equation} \label{eq:iter}
x^{k+1}= x^{k}-\frac{1}{2}\gamma \rho(x^k)A^{-1}G^{-1}(Ax^k-|x^k|-b).
\end{equation}
That is, each iteration step requires the exact solution of the linear system with coefficient matrix $GA$. In particular, let $G=I$, then $\rho(x)\equiv 1$ and the exact  iteration scheme~\eqref{eq:iter} reduces to
\begin{equation} \label{eq:iterI}
x^{k+1}= \left(1-\frac{1}{2}\gamma \right)x^{k}+\frac{1}{2}\gamma A^{-1}(|x^k|+b)
\end{equation}
and each iteration step requires the exact solution of the linear system of equations with coefficient matrix $A$. The iterative scheme \eqref{eq:iterI} is similar to the first stage of the SOR-like iteration method \eqref{eq:equiv}; however, \eqref{eq:iterI} is not dependent on the iteration sequence $\{y^k\}$. Thus, the iteration scheme~\eqref{eq:iterI} is more competitive than the SOR-like iteration method in terms of CPU time, especially for large-scale problems.

The iteration sequence $\{x^k\}$ generated by Algorithm~\ref{Alg1} has the following property.

\begin{thm}\label{thm:iterpro}Assume that \eqref{assump} holds and let $x^*\in S^*$ be an arbitrary solution of the AVE~\eqref{eq:ave}. Then the sequence $\{x^k\}$ generated by Algorithm~\ref{Alg1} satisfies
\begin{equation}\label{exact}
\left\|A(x^{k+1}-x^*)\right\|^2_G \le \left\| A(x^k-x^*) \right \|^2_G-\frac{\gamma(2-\gamma)}{4}\rho(x^k)\left\|e(x^k)\right\|^2.
\end{equation}
\end{thm}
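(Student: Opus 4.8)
The plan is to track the $G$-weighted quantity $\|A(x^k-x^*)\|_G^2=\langle A(x^k-x^*),\,G\,A(x^k-x^*)\rangle$ and show it decreases by the advertised amount. First I would put the exact update into a convenient form. By Theorem~\ref{thm:aveequivpe}, $e(x^k)=Ax^k-|x^k|-b$, so the recursion \eqref{eq:iter} reads $x^{k+1}=x^k-\tfrac12\gamma\rho(x^k)A^{-1}G^{-1}e(x^k)$. Subtracting $x^*$ and left-multiplying by $A$ cancels the inverse $A^{-1}$ and produces the clean recursion
\begin{equation*}
A(x^{k+1}-x^*)=A(x^k-x^*)-\tfrac12\gamma\rho(x^k)G^{-1}e(x^k).
\end{equation*}

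Next I would square this in the $G$-norm. Writing $w^k=A(x^k-x^*)$ and using the symmetry of $G$ and $G^{-1}$, the cross term collapses to $\langle w^k,e(x^k)\rangle$ and the quadratic term to $\langle e(x^k),G^{-1}e(x^k)\rangle$, giving
\begin{equation*}
\|w^{k+1}\|_G^2=\|w^k\|_G^2-\gamma\rho(x^k)\langle A(x^k-x^*),e(x^k)\rangle+\tfrac14\gamma^2\rho(x^k)^2\langle e(x^k),G^{-1}e(x^k)\rangle.
\end{equation*}
The quadratic term is immediate: the definition \eqref{eq:rho} of $\rho$ gives $\langle e(x^k),G^{-1}e(x^k)\rangle=\|e(x^k)\|^2/\rho(x^k)$, so that term equals $\tfrac14\gamma^2\rho(x^k)\|e(x^k)\|^2$.

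The cross term is the crux, and here I would invoke Theorem~\ref{thm:contrac}: since $x^*\in S^*$,
\begin{equation*}
\langle A(x^k-x^*),e(x^k)\rangle\ge\tfrac12\bigl\{\|e(x^k)\|^2+\langle x^k-x^*,(A^TA-I)(x^k-x^*)\rangle\bigr\}.
\end{equation*}
The hypothesis \eqref{assump} enters decisively at this point: $\|A^{-1}\|\le1$ is equivalent to $\sigma_{\min}(A)\ge1$, i.e.\ $A^TA-I$ is positive semidefinite, so the inner-product term is nonnegative and may be dropped, leaving $\langle A(x^k-x^*),e(x^k)\rangle\ge\tfrac12\|e(x^k)\|^2$. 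Multiplying by the positive factor $\gamma\rho(x^k)$ and substituting both bounds, the last two terms combine into $-(\tfrac12\gamma-\tfrac14\gamma^2)\rho(x^k)\|e(x^k)\|^2=-\tfrac14\gamma(2-\gamma)\rho(x^k)\|e(x^k)\|^2$, which is exactly \eqref{exact}.

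I do not expect a serious obstacle: once the recursion for $A(x^{k+1}-x^*)$ is in hand the computation is routine bookkeeping, and the only genuine content is the monotonicity input from Theorem~\ref{thm:contrac}. The points to get right are that the favorable sign of the cross term rests on $A^TA-I$ being positive semidefinite — precisely where $\|A^{-1}\|\le1$ (the nonstrict condition) is used — and that $\rho(x^k)>0$ together with $\gamma\in(0,2)$ keeps every inequality pointing the right way so that $\gamma(2-\gamma)>0$. Note that neither strict monotonicity nor uniqueness of the solution is required, which is why the estimate holds for an arbitrary $x^*\in S^*$.
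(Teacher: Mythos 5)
Your proof is correct and follows essentially the same route as the paper's: both expand the $G$-norm square of the recursion $A(x^{k+1}-x^*)=A(x^k-x^*)-\tfrac12\gamma\rho(x^k)G^{-1}e(x^k)$, bound the cross term via Theorem~\ref{thm:contrac} together with the positive semidefiniteness of $A^TA-I$ (from $\|A^{-1}\|\le 1$), and evaluate the quadratic term using the definition \eqref{eq:rho} of $\rho$. The only differences are cosmetic (the paper carries a factor $2A$ throughout) and that you state explicitly the dropping of the nonnegative term $\langle x^k-x^*,(A^TA-I)(x^k-x^*)\rangle$, which the paper leaves implicit.
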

\begin{proof}
Note that $\Theta_k(x^{k+1})=0$ means that
\begin{equation*}%\label{equ9}
2Ax^{k+1}=2Ax^k-\gamma\rho(x^k)G^{-1}e(x^k).
\end{equation*}
Hence,
\begin{align*}
&\left\|2A(x^{k+1}-x^*)\right\|^2_G\\
&\qquad = \left\| 2A(x^{k}-x^*)-\gamma\rho(x^k)G^{-1}e(x^k) \right \|^2_G\\
&\qquad = \left\| 2A(x^{k}-x^*)\right \|^2_G -4\gamma \rho(x^k)(x^k-x^*)^TA^T e(x^k)+\left(\gamma\rho(x^k)\right)^2 e(x^k)^T G^{-1}e(x^k) \\
&\qquad \le \left\| 2A(x^k-x^*)\right\|^2_G-2\gamma\rho(x^k)\left\|e(x^k)\right\|^2 + \left(\gamma\rho(x^k)\right)^2 e(x^k)^T G^{-1} e(x^k)\\
&\qquad =\left\| 2A(x^k-x^*)\right\|^2_G-\gamma(2-\gamma)\rho(x^k)\left\|e(x^k)\right\|^2,
\end{align*}
where the inequality follows from \eqref{ie:contr} and the last equality from \eqref{eq:rho}.
\end{proof}

Since $G$ is symmetric positive definite,
{ $$
e(x)^TG^{-1}e(x)\le \lambda_{\max}(G^{-1})\|e(x)\|^2.$$
From \eqref{eq:rho}, we have that
$$
\rho(x)\ge \frac{1}{\lambda_{\max}(G^{-1})}=\lambda_{\min}(G)>0,$$}
provided that $x$ is not a solution of AVE~\eqref{eq:ave}.
As a consequence,
\begin{equation*}%\label{ie:iterpro}
\left\|A(x^{k+1}-x^*)\right\|^2_G \le \left\| A(x^k-x^*) \right \|^2_G-\frac{\gamma(2-\gamma)\lambda_{\min}(G)}{4}\left\|e(x^k)\right\|^2,
\end{equation*}
indicating the sequence $\{x^{k}\}$ is Fej\'er monotone with respect to the solution set $S^*$ of the AVE~\eqref{eq:ave}, measured with the norm $\|\cdot\|_{A^TGA}$. Hence, due to the fact that $A$ is nonsingular and $G$ is symmetric positive definite, the global convergence of $\{x^k\}$ to a solution can be easily established, with the help of Lemma \ref{fejer}.

However, in many cases, solving a system of linear equations exactly is either expensive or impossible. On the other hand, there seems to be little justification of the effort required to calculate an accurate solution of $\Theta_k(x)=0$ in each iteration. Hence, a new algorithm adopting approximate solutions of subproblems is much desirable. Before describing the inexact algorithm, we first recall the following error bound theorem for piecewise linear multifunctions, which is established in \cite[Theorem~3.3]{zhng2014}.

\begin{thm}\label{thm:plm}
Let $\mathcal{G}$ be a piecewise linear multifunction. For any $\kappa > 0$, there exists $\mu >0$ such that\\

\hskip 4cm $\hbox{dist}(x,\mathcal{G}^{-1}(0))\le \mu \hbox{dist}(0,\mathcal{G}(x)), \; \forall\, \|x\|<\kappa.\hfill\qed
$
\end{thm}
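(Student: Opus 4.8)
The plan is to transfer the problem to the inverse multifunction and exploit the fact that piecewise linear (polyhedral) multifunctions are upper Lipschitzian in the sense of Robinson. First I would write the graph of $\mathcal{G}$ as a finite union of polyhedral convex sets; swapping the domain and range coordinates then shows that the inverse $\mathcal{G}^{-1}$ is again a piecewise linear multifunction and that $\mathcal{G}^{-1}(0)$ is exactly the zero set appearing on the left-hand side. I shall assume $\mathcal{G}^{-1}(0)\neq\emptyset$, since otherwise $\hbox{dist}(x,\mathcal{G}^{-1}(0))=+\infty$ and the estimate can hold only vacuously.

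The key ingredient is Robinson's theorem: a piecewise linear multifunction is upper Lipschitzian with a locally uniform modulus. Applying it to $\mathcal{G}^{-1}$ at the single point $0$ yields constants $\delta>0$ and $\mu_0>0$ such that $\mathcal{G}^{-1}(y)\subseteq \mathcal{G}^{-1}(0)+\mu_0\|y\|\mathbb{B}$ whenever $\|y\|<\delta$, where $\mathbb{B}=\{z:\|z\|\le 1\}$. I would also record that for every $x$ with $\mathcal{G}(x)\neq\emptyset$ the set $\mathcal{G}(x)$ is a finite union of closed polyhedra, hence closed, so $\hbox{dist}(0,\mathcal{G}(x))$ is attained by some $y\in\mathcal{G}(x)$ with $\|y\|=\hbox{dist}(0,\mathcal{G}(x))$.

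I would then split the region $\|x\|<\kappa$ into two regimes. In the near regime $\hbox{dist}(0,\mathcal{G}(x))<\delta$, the attaining $y$ satisfies $x\in\mathcal{G}^{-1}(y)$ with $\|y\|<\delta$, so Robinson's inclusion produces a point of $\mathcal{G}^{-1}(0)$ within $\mu_0\|y\|$ of $x$, giving $\hbox{dist}(x,\mathcal{G}^{-1}(0))\le\mu_0\,\hbox{dist}(0,\mathcal{G}(x))$. In the far regime $\hbox{dist}(0,\mathcal{G}(x))\ge\delta$, I fix any $\bar x\in\mathcal{G}^{-1}(0)$ and estimate crudely $\hbox{dist}(x,\mathcal{G}^{-1}(0))\le\|x-\bar x\|<\kappa+\|\bar x\|\le \frac{\kappa+\|\bar x\|}{\delta}\,\hbox{dist}(0,\mathcal{G}(x))$, which is precisely where the boundedness hypothesis $\|x\|<\kappa$ enters. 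Taking $\mu=\max\{\mu_0,(\kappa+\|\bar x\|)/\delta\}$ then covers both regimes (and the case $\mathcal{G}(x)=\emptyset$ is trivial, the right-hand side being $+\infty$).

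The main obstacle is concentrated entirely in the upper-Lipschitz step: Robinson's theorem is the substantive fact, and its own proof rests on Hoffman's error bound for systems of linear inequalities together with the finiteness of the polyhedral decomposition of the graph. A naive attempt to run Hoffman piece by piece stumbles on those graph pieces that become infeasible at $y=0$: on such a piece a small $\|y\|$ need not place $x$ near the zero set of that piece, and one must invoke the global polyhedral structure to relocate $x$ onto a feasible piece. Delegating that difficulty to Robinson's theorem is what keeps the argument short, so that the only genuinely new observations are the reduction to $\mathcal{G}^{-1}$ evaluated at $0$ and the elementary far-field estimate enabled by $\|x\|<\kappa$.
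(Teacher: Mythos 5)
You cannot really be compared against an in-paper argument here, because the paper supplies none: Theorem~\ref{thm:plm} is stated without proof (note the terminal QED box) and is imported verbatim from Zheng and Ng \cite{zhng2014} (their Theorem~3.3); the paper uses it purely as a black box to obtain the error bound \eqref{err}. Judged on its own merits, your argument is correct. Swapping coordinates in the graph shows $\mathcal{G}^{-1}$ is again piecewise linear; each value $\mathcal{G}(x)$ is a finite union of polyhedra, hence closed, so $\hbox{dist}(0,\mathcal{G}(x))$ is attained; Robinson's classical theorem on polyhedral multifunctions (Math.\ Programming Study 14, 1981) yields $\mathcal{G}^{-1}(y)\subseteq\mathcal{G}^{-1}(0)+\mu_0\|y\|\mathbb{B}$ for $\|y\|<\delta$; and your near/far dichotomy correctly upgrades this local, single-point statement to a uniform modulus on $\{x:\|x\|<\kappa\}$, the far regime being precisely where the hypothesis $\|x\|<\kappa$ and the nonemptiness of $\mathcal{G}^{-1}(0)$ enter. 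Two caveats. First, your proof is not more self-contained than the paper's citation: the genuinely hard polyhedral content (Hoffman's bound and the relocation past pieces that are infeasible at $y=0$, which you rightly identify) is delegated to Robinson exactly as the paper delegates it to Zheng--Ng; the difference is only the source, Robinson's result being the older and more elementary entry point, while Zheng--Ng prove sharper metric-subregularity statements of which the quoted theorem is a special case. Second, the nonemptiness of $\mathcal{G}^{-1}(0)$ is not merely a vacuity issue as you suggest: if $\mathcal{G}^{-1}(0)=\emptyset$ while $\mathcal{G}(x)\neq\emptyset$ for some $\|x\|<\kappa$, the asserted inequality is false (left side $+\infty$, right side finite), so it is an implicit hypothesis of the theorem as stated; in the paper's application this is harmless, since assumption \eqref{assump} guarantees $\mathcal{G}^{-1}(0)=S^*\neq\emptyset$.
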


\medskip

From Theorem \ref{thm:aveequivpe}, solving AVE \eqref{eq:ave} is equivalent to finding zero points of the residual function $e(\cdot)$, { defined as in~\eqref{ex}, } which is piecewise linear. Hence, there is $\mu>0$ such that
\[\label{err}\hbox{dist}(x,S^*)\le \mu \|e(x)\|,\; \forall\, \|x\|<\kappa.\]

In the following, we develop an inexact method, in which we solve $\Theta_k(x)=0$ approximately.

\begin{alg}\label{Alg2}{\bf $($Inexact Douglas-Rachford splitting method for  the AVE \eqref{eq:ave}$)$}
Given $x^0 \in \mathbb{R}^n$, $\gamma \in (0,2)$, $\delta\in(0,1)$, and a symmetric positive definite matrix $G\in \mathbb{R}^{n\times n}$. %Let $\alpha_k,\beta_k,\eta_k>0$ satisfy $C_k\ge C>0$, where $C_k$ is defined as in \eqref{eq:c}.
For $k=0,1,\cdots,$ if $x^k \notin S^*$, then find $x^{k+1}$ such that
\begin{equation}\label{ie:crita1}
\left\| \Theta_k(x^{k+1})\right\| \le \alpha_k \left\| e(x^k)\right\|,
\end{equation}
where $\Theta_k(x)$ is defined as in \eqref{eq:Theta} and $\rho(x)$ is defined as in \eqref{eq:rho}, and
{
\[\label{al}
0\le \alpha_k \le \frac{(1-\delta)\gamma(2-\gamma)\rho(x^k)}{4\mu\|A^TG\|+2\gamma\rho(x^k)+\lambda_{\max}(G)}<1.\]
}
\end{alg}

%The sequence $\left\{x^k\right\}$ produced by Algorithm~\ref{Alg2} satisfies the following weak contractive property. In order to prove it,
%Since $e(x)$ defined in \eqref{eq:res} is a piecewise linear function, according to Theorem~\ref{thm:plm}, for $x^k$ generated by Algorithm~\ref{Alg2}, there exists $\kappa_k>0$ and $\chi_k>0$ such that $\|x^k\|<\kappa_k$ and $\textbf{dist}(x^k-S^*)\le \chi_k \|e(x^k)\|$. On the other hand,
%$\textbf{dist}(x^k-S^*)\le \|x^k-x^*\|,\quad \forall\; x^*\in S^*$. Therefore, there is $\mu>0$ such that $\|x^k-x^*\|\le \mu\chi_k \|e(x^k)\|,  \quad \forall\; x^*\in S^*$ and then we have
%\begin{equation}\label{ie:crita2}
%\left\|A(x^k-x^*)\right\|\le\frac{ \beta_k}{2} \left\| e(x^k)\right\|, \quad \forall\; x^*\in S^*,
%\end{equation}
%where $\beta_k = 2\mu \chi_k \|A\|$.

Now we can prove the following contractive property between the generated sequence and the solution set.
\begin{thm}\label{thm:wcov}%
%Let $\{x^k\}$  be the sequence generated by Algorithm~\ref{Alg2} for  the AVE \eqref{eq:ave}, then we have
Assume that \eqref{assump} holds and let $x^*\in S^*$ be an arbitrary solution of the
AVE~\eqref{eq:ave}. Then the sequence $\{x^k\}$ generated by Algorithm~\ref{Alg2} satisfies
\begin{equation}\label{inexact}
\left\|A(x^{k+1}-x^*)\right\|^2_G  \le  \left\| A(x^k-x^*) \right \|^2_G -\frac{\delta\gamma(2-\gamma)\rho(x^k)}{4}\left\|e(x^k)\right\|^2,  \; \forall\, k\ge 0.
\end{equation}
%where
%\begin{equation}\label{eq:c}
%C_k=2\gamma\rho(x^k)-c\lambda_{\max}(G)\eta_k^2\beta_k^2-\frac{1}{4}\gamma(2-\gamma)\rho(x^k)-\gamma^2\rho(x^k)-2\gamma\alpha_k\rho(x^k)-\lambda_{\max}(G)\alpha_k^2
%\end{equation}
%and
%$$ c= \frac{4\lambda_{\max}(G)\lambda_{\max}(G^{-1})}{\gamma(2-\gamma)}.$$
\end{thm}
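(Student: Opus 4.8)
The plan is to mimic the proof of the exact case (Theorem~\ref{thm:iterpro}) but to carry along the error term coming from the inexact criterion~\eqref{ie:crita1}. Set $r^k := \Theta_k(x^{k+1})$, so that $\|r^k\|\le \alpha_k\|e(x^k)\|$. From the definition~\eqref{eq:Theta} of $\Theta_k$ we have the identity
\begin{equation*}
2A(x^{k+1}-x^*) = 2A(x^k-x^*) - \gamma\rho(x^k)G^{-1}e(x^k) + r^k.
\end{equation*}
First I would expand $\|2A(x^{k+1}-x^*)\|_G^2$ using this identity, just as in Theorem~\ref{thm:iterpro}, but now with the extra summand $r^k$. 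This produces the clean terms already handled in the exact proof plus cross terms involving $r^k$, namely $2\langle 2A(x^k-x^*)-\gamma\rho(x^k)G^{-1}e(x^k),\, r^k\rangle_G$ together with $\|r^k\|_G^2$.

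Second, I would bound these error terms from above. The term $\|r^k\|_G^2 = (r^k)^TG r^k$ is at most $\lambda_{\max}(G)\|r^k\|^2 \le \lambda_{\max}(G)\alpha_k^2\|e(x^k)\|^2$. For the cross term, I would use the $G$-inner product to cancel the $G^{-1}$ in $\gamma\rho(x^k)G^{-1}e(x^k)$, giving $-2\gamma\rho(x^k)\langle e(x^k),r^k\rangle \le 2\gamma\rho(x^k)\alpha_k\|e(x^k)\|^2$ by Cauchy--Schwarz, while the remaining cross term $4\langle A(x^k-x^*),\, Gr^k\rangle$ is the delicate one. Here I would write $\langle A(x^k-x^*),Gr^k\rangle \le \|A^TG\|\,\|x^k-x^*\|\,\|r^k\|$ and then invoke the error bound~\eqref{err}: since $\|A^TG(x^k-x^*)\|\le\|A^TG\|\,\mathrm{dist}(x^k,S^*)$ after choosing $x^*$ as the projection of $x^k$ onto $S^*$, and $\mathrm{dist}(x^k,S^*)\le\mu\|e(x^k)\|$, this cross term is controlled by $4\mu\|A^TG\|\alpha_k\|e(x^k)\|^2$. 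The main obstacle is precisely this step: the cross term is linear in the displacement $x^k-x^*$, which is not a priori bounded by $\|e(x^k)\|$, so the piecewise-linear error bound of Theorem~\ref{thm:plm} (in the form~\eqref{err}) is exactly what is needed, and one must take $x^*$ to be the nearest solution to make $\|x^k-x^*\|=\mathrm{dist}(x^k,S^*)$.

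Collecting everything, the coefficient of $\|e(x^k)\|^2$ on the right-hand side becomes
\begin{equation*}
-\gamma(2-\gamma)\rho(x^k) + \bigl(4\mu\|A^TG\| + 2\gamma\rho(x^k) + \lambda_{\max}(G)\alpha_k\bigr)\alpha_k,
\end{equation*}
after dividing the whole inequality by $4$ to pass from $\|2A(\cdot)\|_G^2$ to $\|A(\cdot)\|_G^2$ and absorbing the factor into the constants. Finally I would substitute the bound~\eqref{al} on $\alpha_k$: it is engineered so that the sum of error contributions is dominated by $(1-\delta)\gamma(2-\gamma)\rho(x^k)$, leaving a net negative coefficient of at least $\delta\gamma(2-\gamma)\rho(x^k)/4$ after the division by $4$. (The presence of $\alpha_k\le 1$ lets one replace $\lambda_{\max}(G)\alpha_k^2$ by $\lambda_{\max}(G)\alpha_k$ in the denominator of~\eqref{al}, which is the reason the square on $\alpha_k$ is harmless.) This yields exactly~\eqref{inexact}, completing the proof.
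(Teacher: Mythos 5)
Your proof is correct and follows essentially the same route as the paper's: the same expansion of $\|2A(x^{k+1}-x^*)\|_G^2$ via the identity $2A(x^{k+1}-x^*)=2A(x^k-x^*)-\gamma\rho(x^k)G^{-1}e(x^k)+\Theta_k(x^{k+1})$, the same three bounds on the error terms (Cauchy--Schwarz for $-2\gamma\rho(x^k)\langle e(x^k),\Theta_k(x^{k+1})\rangle$, the eigenvalue bound $\lambda_{\max}(G)\alpha_k^2\|e(x^k)\|^2$, and the piecewise-linear error bound \eqref{err} for the cross term $4(x^k-x^*)^TA^TG\,\Theta_k(x^{k+1})$), and the same absorption of all $\alpha_k$ contributions into $(1-\delta)\gamma(2-\gamma)\rho(x^k)$ using \eqref{al} together with $\alpha_k\le 1$. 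If anything, you are slightly more careful than the paper, which silently replaces $\|x^k-x^*\|$ by $\mu\|e(x^k)\|$ for an \emph{arbitrary} $x^*\in S^*$, a step that, as you note, is justified by \eqref{err} only when $x^*$ is taken to be the solution nearest to $x^k$.
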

\begin{proof}
Recall that  in Algorithm \ref{Alg2}
\begin{equation*}%\label{eqr11}
2A(x^{k+1}-x^k)=-\gamma \rho(x^k)G^{-1}e(x^k)+\Theta_k(x^{k+1}).
\end{equation*}
Hence,
\begin{align}\nonumber
&\left\|2A(x^{k+1}-x^*)\right\|^2_G\\\nonumber
&\qquad =\left\| 2A(x^k-x^*) -\left[ \gamma \rho(x^k)G^{-1}e(x^k)-\Theta_k(x^{k+1})\right] \right\|^2_G  \\\nonumber
&\qquad =\left\| 2A(x^k-x^*) \right\|^2_G  + \left\| \gamma \rho(x^k)G^{-1}e(x^k)-\Theta_k(x^{k+1})\right\|^2_G  \\\nonumber
&\qquad\qquad -4\gamma \rho(x^k)(x^k-x^*)^TA^Te(x^k) +4(x^k-x^*)^TA^T G \Theta_k(x^{k+1}) \\\nonumber
&\qquad  \le \left\|2A(x^k-x^*) \right\|^2_G -2\gamma \rho(x^k) \left\| e(x^k)\right\|^2 \\ \label{eqr11}
&\qquad\qquad + \left\| \gamma \rho(x^k)G^{-1}e(x^k)-\Theta_k(x^{k+1})\right\|^2_G +4(x^k-x^*)^TA^T G \Theta_k(x^{k+1}),
\end{align}
where the inequality follows from Theorem~\ref{thm:contrac}.
We now deal with the last two terms in \eqref{eqr11} one by one.

For the first term, we have
\begin{align}\nonumber
&\left\| \gamma \rho(x^k)G^{-1}e(x^k)-\Theta_k(x^{k+1})\right\|^2_G \\\nonumber
&\qquad =\left\| \gamma \rho(x^k)G^{-1}e(x^k)\right\|^2_G+ \left\|\Theta_k(x^{k+1})\right\|^2_G - 2\gamma \rho(x^k)\Theta_k(x^{k+1})^T e(x^k) \\\nonumber
&\qquad \le \left\| \gamma \rho(x^k)G^{-1}e(x^k)\right\|^2_G+ \left\|\Theta_k(x^{k+1})\right\|^2_G +2\gamma \rho(x^k)\left\|\Theta_k(x^{k+1})\right\| \left\|e(x^k)\right\|\\\label{eqr13}
&\qquad \le \gamma^2 \rho(x^k) \left\|e(x^k)\right\|^2 +\lambda_{\max}(G) \alpha_k^2  \left\|e(x^k)\right\|^2
+ 2\gamma \rho(x^k) \alpha_k \left\|e(x^k)\right\|^2,
\end{align}
where the first inequality follows from the Cauchy-Schwarz inequality, and the last one from the definition of $\rho$
and the approximate criterion \eqref{ie:crita1}.

We now consider the second term. Using again the Cauchy-Schwarz inequality and  the approximate criterion \eqref{ie:crita1}, we have
\begin{align}\label{eqr12}
&4(x^k-x^*)^TA^TG \Theta_k(x^{k+1}) \le4\mu\alpha_k\left\|A^TG\right\|\left\| e(x^k)\right\|^2.
\end{align}

Substituting \eqref{eqr12} and \eqref{eqr13} in inequality  \eqref{eqr11}, we have
\begin{align*}
&\left\|2A(x^{k+1}-x^*)\right\|^2_G\\
&\qquad \le \left\|2A(x^k-x^*)\right\|^2_G- \gamma(2-\gamma)\rho(x^k) \left\|e(x^k)\right\|^2\\
&\qquad\qquad +\left(\lambda_{\max}(G) \alpha_k^2+ 2\gamma \rho(x^k)\alpha_k+4\mu\alpha_k\left\|A^TG\right\|\right)\left\|e(x^k)\right\|^2\\
&\qquad \le \left\|2A(x^k-x^*)\right\|^2_G- \delta\gamma(2-\gamma)\rho(x^k) \left\|e(x^k)\right\|^2,
\end{align*}
%where
%$$C_k=2\gamma\rho(x^k)-c\lambda_{\max}(G)\eta_k^2\beta_k^2-\frac{1}{4}\gamma(2-\gamma)\rho(x^k)-\gamma^2\rho(x^k)-2\gamma\alpha_k\rho(x^k)-\lambda_{\max}(G)\alpha_k^2.$$
where the last inequality follows from the { determination} of $\alpha_k$.
\end{proof}

\begin{rem}
Comparing the contractive properties of the sequences generated by Algorithm~\ref{Alg1} and Algorithm~\ref{Alg2}, i.e.,  the inequalities \eqref{exact} and \eqref{inexact}, we find that the only difference between them is the parameter $\delta$ in \eqref{inexact}. Essentially, if $\delta=1$, then \eqref{inexact} reduces to \eqref{exact}; and for this case, $\alpha_k\equiv 0$, indicating the algorithm also reduces to the exact one. Hence, in the following, we only analyze the convergence behavior of Algorithm~\ref{Alg2}; and that for Algorithm~\ref{Alg2} can be derived in a similar way. \qed
\end{rem}

%In the following, we will prove the convergence of the Algorithms~\ref{Alg1} and \ref{Alg2} under the Assumption~\eqref{eq:assump}. Since the proofs are similar, we only give the convergence analysis of Algorithm~\ref{Alg2} and we have the following theorem.

We are now ready to present the convergence of Algorithm \ref{Alg2}.

\begin{thm}\label{thm:cov} %Under the Assumption~\eqref{eq:assump},
%the sequence $\{x^k\}$ generated by the Algorithm~\ref{Alg2} for the AVE \eqref{eq:ave} converges to a solution $x^*$.
Assume that \eqref{assump} holds. Then the sequence $\{x^k\}$ generated by Algorithm~\ref{Alg2} globally converges to
a solution of the AVE~\eqref{eq:ave}. Furthermore, the rate of convergence is linear.
\end{thm}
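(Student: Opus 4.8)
The plan is to combine the single descent inequality~\eqref{inexact} from Theorem~\ref{thm:wcov} with the piecewise-linear error bound~\eqref{err} and the Fej\'er-monotonicity machinery of Lemma~\ref{fejer}. Throughout I would work with the norm $\|v\|_* := \|Av\|_G = \sqrt{v^T A^T G A v}$; since $A$ is nonsingular and $G$ is symmetric positive definite, $A^T G A$ is symmetric positive definite, so there are constants $c_1,c_2>0$ with $c_1\|v\|^2 \le \|v\|_*^2 \le c_2\|v\|^2$ for all $v$. I would also record the lower bound $\rho(x^k)\ge \lambda_{\min}(G)>0$ established just after Theorem~\ref{thm:iterpro}, so that the coefficient of $\|e(x^k)\|^2$ in~\eqref{inexact} is bounded below by a fixed positive constant $\beta := \delta\gamma(2-\gamma)\lambda_{\min}(G)/4$.

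First I would fix an arbitrary $x^*\in S^*$ and set $\xi_k := \|x^k - x^*\|_*^2$. Inequality~\eqref{inexact} then reads $\xi_{k+1}\le \xi_k - \omega_k$ with $\omega_k := \tfrac14\delta\gamma(2-\gamma)\rho(x^k)\|e(x^k)\|^2 \ge 0$; as $\{\xi_k\}$ is bounded below by $0$, Lemma~\ref{fejer} yields that $\{\xi_k\}$ is decreasing and convergent and that $\sum_k\omega_k<\infty$. Decreasingness of $\xi_k$ together with the norm equivalence shows $\{x^k\}$ is bounded, so there is $\kappa$ with $\|x^k\|<\kappa$ for all $k$ and, by~\eqref{err}, a single $\mu>0$ with $\text{dist}(x^k,S^*)\le \mu\|e(x^k)\|$ for every $k$.

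The heart of the argument is the linear rate. Let $D_k := \inf_{y\in S^*}\|x^k-y\|_*$ and choose $y^k\in S^*$ attaining this infimum (the $*$-projection onto the closed set $S^*=e^{-1}(0)$). Applying~\eqref{inexact} with $x^*=y^k$, using $\|x^{k+1}-y^k\|_*^2\ge D_{k+1}^2$, the bound $\rho(x^k)\ge\lambda_{\min}(G)$, and the chain $\|e(x^k)\|\ge \mu^{-1}\text{dist}(x^k,S^*)\ge (\mu\sqrt{c_2})^{-1}D_k$, I would obtain $D_{k+1}^2 \le \eta D_k^2$ with $\eta := \max\{0,\,1-\beta/(\mu^2 c_2)\}\in[0,1)$; hence $D_k^2\le \eta^k D_0^2$, i.e. $D_k\to 0$ R-linearly. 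The main obstacle is precisely this step: because the Assumption only guarantees $\|A^{-1}\|\le 1$, the GLCP need not be strongly monotone and $S^*$ may fail to be a singleton, so one cannot contract toward a single fixed solution; the local error bound~\eqref{err} for the piecewise-linear map $e(\cdot)$ is exactly what converts the decrease measured by $\|e(x^k)\|$ into a genuine contraction of the distance to the whole set $S^*$.

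Finally I would upgrade the R-linear decay of $D_k$ to R-linear convergence of the iterates. From the recursion $2A(x^{k+1}-x^k)=-\gamma\rho(x^k)G^{-1}e(x^k)+\Theta_k(x^{k+1})$, the criterion~\eqref{ie:crita1}, and the two-sided bounds $\lambda_{\min}(G)\le\rho(x^k)\le\lambda_{\max}(G)$, I would bound $\|x^{k+1}-x^k\|_*\le C\|e(x^k)\|$; combined with $\|e(x^k)\|\le(\|A\|+1)\,\text{dist}(x^k,S^*)\le (\|A\|+1)c_1^{-1/2}D_k$ (using $e(x^k)=A(x^k-y^k)-(|x^k|-|y^k|)$ and $e(y^k)=0$), this gives $\|x^{k+1}-x^k\|_*\le C'\eta^{k/2}D_0$. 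Summability of these geometrically decaying steps makes $\{x^k\}$ Cauchy in $\|\cdot\|_*$, hence convergent to some $\bar x$; since $\text{dist}(x^k,S^*)\to 0$ and $S^*$ is closed, $\bar x\in S^*$, and telescoping from index $k$ to infinity yields $\|x^k-\bar x\|_*\le C'(1-\sqrt\eta)^{-1}D_0\,\eta^{k/2}$. Converting back through $c_1\|v\|^2\le\|v\|_*^2$ then gives $\|x^k-\bar x\|=O(\eta^{k/2})$, which is the claimed global linear convergence to a solution of~\eqref{eq:ave}.
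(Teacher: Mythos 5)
Your proof is correct, and it reaches the conclusion by a genuinely different route than the paper's. The shared skeleton is the same: boundedness of $\{x^k\}$ from the Fej\'er-type inequality \eqref{inexact}, the lower bound $\rho(x^k)\ge\lambda_{\min}(G)$, and the piecewise-linear error bound \eqref{err} on a ball containing the iterates. The decisive steps differ, however. The paper extracts a cluster point $\hat x$ of the bounded sequence, shows $e(\hat x)=0$ because \eqref{inexact} forces $\|e(x^k)\|\to 0$, then substitutes $x^*:=\hat x$ back into \eqref{inexact} and, combining it with the chain $\|A(x^{k+1}-\hat x)\|_G^2\le\lambda_{\max}(A^TGA)\|x^k-\hat x\|^2\le\mu\lambda_{\max}(A^TGA)\|e(x^k)\|^2$, arrives at a Q-linear contraction $\|A(x^{k+1}-\hat x)\|_G^2\le\varrho\,\|A(x^k-\hat x)\|_G^2$ with a fixed ratio $\varrho\in(0,1)$, so the whole sequence converges Q-linearly to $\hat x$. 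You never single out a cluster point: you contract the $\|A\cdot\|_G$-distance $D_k$ to the \emph{whole} solution set (via nearest points $y^k$, which exist since $S^*$ is nonempty and closed, and which are legitimate choices of $x^*$ in \eqref{inexact} because the iterates do not depend on $x^*$), and you then convert geometric decay of $D_k$ into geometric decay of the step lengths $\|x^{k+1}-x^k\|$, concluding by a Cauchy/telescoping argument that $x^k\to\bar x\in S^*$ R-linearly. What each approach buys: the paper's route is shorter and nominally yields the stronger Q-linear rate toward a single point; your route yields only an R-linear rate, but it is robust to non-uniqueness of solutions. That robustness is not a cosmetic advantage: the paper's middle inequality applies the error bound to the \emph{particular} solution $\hat x$, i.e.\ it uses $\|x^k-\hat x\|^2\le\mu\|e(x^k)\|^2$, whereas \eqref{err} only controls $\mathrm{dist}(x^k,S^*)$; under \eqref{assump} with $\|A^{-1}\|=1$ the set $S^*$ need not be a singleton, so $\hat x$ need not be the solution nearest to $x^k$ and that step is not justified as written (there is also a harmless $\mu$ versus $\mu^2$ slip there). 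Your set-distance formulation plus the Cauchy step is exactly what closes this gap rigorously, at the price of the formally weaker (R-linear rather than Q-linear) notion of linear convergence.
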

\begin{proof} From \eqref{eq:rho} we conclude that { $\rho(x^k)\ge 1/\lambda_{\max}(G^{-1})$ } for all $k$. Hence, it follows from
the nonsingularity of $A$ that $\{x^k\}$ is bounded which does have at least one cluster point, denoted by $\hat x$. Moreover, \eqref{inexact} also implies that
$$
\lim_{k\to\infty}\|e(x^k)\|=0,$$
which together with the continuity of the mapping $e(\cdot)$, implies that $e(\hat x)=0$ and hence $\hat x$ is a solution of
AVE \eqref{eq:ave}.
%
%Let $\hat{x}$ be a solution of the AVE~\eqref{eq:ave}. From \eqref{ie:wconprot}, for all $k\ge 0$, we have
%\begin{equation*}
%\left\|2A(x^{k+1}-\hat{x})\right\|^2_G \le  \left\| 2A(x^k-\hat{x})\right \|^2_G.
%\end{equation*}
%Therefore, there exists a constant $C_U>0$ such that
%\begin{equation}\label{ie:iterbound}
%\left\| 2A(x^k-\hat{x})\right \|^2_G \le C_U, \quad \quad \quad \forall k \ge 0.
%\end{equation}
%Because the operator $A$ (or $Q+F$) is continuous and nonsingular, it follows that the sequence $\{x^k\}$ is bounded and therefore it has at least a cluster point. From  \eqref{ie:wconprot} and \eqref{ie:iterbound} we get
%\begin{equation*}
%C  \sum\limits_{k=0}^{\infty}  \left\|e(x^k)\right\|^2\le \sum\limits_{k=0}^{\infty}  C_k\left\|e(x^k)\right\|^2 \le \left\| 2A(x^0-\hat{x}) \right \|^2_G
%-\left\| 2A(x^{\infty}-\hat{x})\right \|^2_G.
%\end{equation*}
%It follows that
%$$ \lim_{k\to \infty} e(x^k)=0. $$
%Let $x^*$ be a cluster point of $\{x^k\}$ and the subsequence $\{x^{k_j}\}$ converges to $x^*$. Because $e(x)$ defined as in \eqref{eq:res} is continuous,
%$$ e(x^*)=\lim_{j\to \infty}e(x^{k_j})=0$$
%and $x^*$ is a solution of the AVE \eqref{eq:ave}.
Since $x^*$ is an arbitrary solution in \eqref{inexact}, we can set $x^*:=\hat x$ and obtain
\begin{eqnarray}\label{linear}
\left\| A(x^{k+1}-\hat x) \right \|^2_G\le  \left\|A(x^k-\hat x) \right \|^2_G-\frac{\delta\gamma(2-\gamma)}{4\lambda_{\max}(G^{-1})}\|e(x^k)\|^2.
\end{eqnarray}
%Because the operator $A$ is continuous and nonsingular, it follows from  \eqref{ie:cau} that the sequence $\{x^k\}$ has exactly a cluster point and
%$$ \lim_{k\to \infty} x^k=x^*.$$
It follows from \eqref{err} that
$$
\| A(x^{k+1}-\hat x) \|^2_G\le\lambda_{\max}(A^TGA)\|x^{k}-\hat x\|^2\le \mu \lambda_{\max}(A^TGA)\|e(x^{k})\|^2.$$
Substituting the last inequality into \eqref{linear} and then rearranging terms, we get
\[\label{linear2}
\left\| A(x^{k+1}-\hat x) \right \|^2_G\le  \varrho\left\|A(x^k-\hat x) \right \|^2_G,\]
where
{
$$
\varrho:=\frac{4\mu \lambda_{\max}(G^{-1}) \lambda_{\max}(A^TGA)}{\delta\gamma(2-\gamma) +4\mu \lambda_{\max}(G^{-1}) \lambda_{\max}(A^TGA)}\in (0,1).$$
}
\eqref{linear2} and the nonsingularity of $A$ then imply $\{x^k\}$ converges globally and in a linear manner to a solution of the AVE \eqref{eq:ave}.
\end{proof}

%\begin{cor} The AVE~\eqref{eq:ave} with $\|A^{-1}\|=1$ has no solution if and onoly if there exists a positive number $j$ such that the sequence $\{x^k\}$ generated by the exact DRs method satisfies $\|x^j\|=\infty$.
%\end{cor}
%\begin{proof}
%If the AVE~\eqref{eq:ave} with $\|A^{-1}\|=1$ has a solution, then from Theorem~\ref{thm:cov} we know that $\{x^k\}$ is convergent to a solution of the AVE~\eqref{eq:ave} and is bounded, which contradicts to the assumption that there exists a positive number $j$ such that $\|x^j\|=\infty$.
%
%If the sequence $\{x^k\}$ generated by the exact DRs method is bounded, then it has a cluster point $x^*$ and it follows from~\eqref{eq:iter} that $e(x^*)=0$, a contradiction.
%\end{proof}
Since we are considering the case where $\|A^{-1}\|\le 1$, which can not guarantee the uniquely solvable of the
AVE \eqref{eq:ave} if $\|A^{-1}\|= 1$. After proving the globally linear convergence of the proposed algorithms for the case that the solution set is nonempty, we now turn attention to the case where the solution set is empty, i.e., $S^*=\emptyset$.
\begin{thm}\label{div}Assume that { $\|A^{-1}\|= 1$} and the AVE \eqref{eq:ave} does not possess a solution. Then the sequence $\{x^k\}$ generated by Algorithm~\ref{Alg2} diverges to infinity, i.e., $\|x^k\|\to \infty$ as $k\to\infty$.
\end{thm}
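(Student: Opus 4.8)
The plan is to argue by contradiction. If the conclusion fails, then $\liminf_k\|x^k\|<\infty$, so some subsequence $\{x^{k_j}\}$ is bounded and, after refining, converges to a limit $\hat x$; I will show this is impossible. The basic tool is a two-point strengthening of \eqref{ie:contr} that requires no solution: applying firm nonexpansiveness of the projection $P_\Omega$ appearing in \eqref{eq:res} at the points $Q(x)-F(x)=2x$ and $Q(y)-F(y)=2y$ gives, for all $x,y\in\mathbb{R}^n$,
\begin{equation*}
\langle 2A(x-y),\,e(x)-e(y)\rangle\ \ge\ \|e(x)-e(y)\|^2+\langle x-y,\,(A^TA-I)(x-y)\rangle .
\end{equation*}
Under \eqref{assump} we have $\|A^{-1}\|\le 1$, i.e. $A^TA-I$ is positive semidefinite, so the last term drops and $\langle 2A(x-y),e(x)-e(y)\rangle\ge\|e(x)-e(y)\|^2$.

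Consider first the exact scheme with $G=I$, where $\rho\equiv 1$ and \eqref{eq:iterI} defines a genuine map $x^{k+1}=T(x^k)$ with $T(x)=(1-\tfrac{\gamma}{2})x+\tfrac{\gamma}{2}A^{-1}(|x|+b)$. Because $\|\,|x|-|y|\,\|\le\|x-y\|$ and $\|x-y\|\le\|A(x-y)\|$ (as $\sigma_{\min}(A)\ge 1$), one checks that $\|A(Tx-Ty)\|\le(1-\tfrac{\gamma}{2})\|A(x-y)\|+\tfrac{\gamma}{2}\|A(x-y)\|=\|A(x-y)\|$; thus $T$ is nonexpansive on the Hilbert space $(\mathbb{R}^n,\langle A\cdot,A\cdot\rangle)$, and by Theorem~\ref{thm:aveequivpe} it is fixed-point-free precisely because $S^*=\emptyset$.

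I would then close this case through the infimal-displacement vector $v$, the minimal-norm element of the closure of $\{x-T(x)\}$. A classical theorem of Baillon--Bruck--Reich (valid here since the space is finite-dimensional, hence uniformly convex) gives $x^k-x^{k+1}\to v$. If $v\neq 0$, summation yields $(x^0-x^k)/k\to v$, so $\|x^k\|\to\infty$, contradicting the bounded subsequence. If $v=0$, then $x^k-x^{k+1}\to 0$; since $x^k-x^{k+1}=\tfrac{\gamma}{2}A^{-1}e(x^k)$ and $A$ is nonsingular, $\|e(x^k)\|\to 0$, whence $e(\hat x)=0$ and $\hat x\in S^*$ by Theorem~\ref{thm:aveequivpe}, again a contradiction. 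Either way the assumed bounded subsequence cannot exist, so $\|x^k\|\to\infty$. (The two alternatives are exactly $\inf_x\|e(x)\|>0$ and $\inf_x\|e(x)\|=0$.)

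The main obstacle is to carry this through for the full Algorithm~\ref{Alg2}. Two features break the clean autonomous-map picture: for $G\neq I$ the step size $\rho(x^k)$ in \eqref{eq:rho} varies with the iterate, so $T$ is no longer obviously nonexpansive; and the inexact term $\Theta_k(x^{k+1})$ in \eqref{eq:Theta}, controlled only by $\|\Theta_k(x^{k+1})\|\le\alpha_k\|e(x^k)\|$ with \eqref{al}, perturbs $x^{k+1}$ away from $T(x^k)$. The work therefore concentrates on showing that the increment energies $\|A(x^{k+1}-x^k)\|_G^2=\tfrac{1}{4}\gamma^2\rho(x^k)\|e(x^k)\|^2$ stay non-increasing up to a summable error, so that the infimal-displacement dichotomy survives as a perturbed asymptotic-regularity statement; equivalently, one may run the Fej\'er estimate of Theorem~\ref{thm:wcov} against a fixed reference $z$ chosen with $\|e(z)\|$ arbitrarily small in the regime $\inf_x\|e(x)\|=0$, and argue directly from the uniform lower bound on $\|e(x^k)\|$ in the complementary regime. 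Reconciling the two regimes, and upgrading ``unbounded'' to genuine norm-divergence, is the delicate point.
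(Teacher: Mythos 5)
Your route is genuinely different from the paper's. The paper proves Theorem~\ref{div} by compactness: assuming $\{x^k\}$ bounded, it considers the AVE constrained to a large ball $\Xi$, invokes Brouwer's fixed-point theorem (via $|x|=2\max\{0,x\}-x$) to produce a solution of that constrained problem, and then applies the convergence theory (Theorem~\ref{thm:cov}) to the constrained iteration to manufacture a solution of the original AVE, contradicting $S^*=\emptyset$. You instead use nonexpansive-operator asymptotics. The part of your argument covering the exact scheme \eqref{eq:iterI} with $G=I$ is correct and complete: the two-point inequality from firm nonexpansiveness of $P_\Omega$ is valid, the map $T(x)=(1-\tfrac{\gamma}{2})x+\tfrac{\gamma}{2}A^{-1}(|x|+b)$ is indeed $\tfrac{\gamma}{2}$-averaged in the inner product $\langle A\cdot,A\cdot\rangle$ when $\|A^{-1}\|\le 1$, it is fixed-point free exactly when $S^*=\emptyset$, and the Baillon--Bruck--Reich dichotomy on the infimal displacement vector $v$ settles both regimes ($v\ne 0$ forces $x^k/k\to -v$, hence norm divergence; $v=0$ forces $e(x^k)\to 0$, so a bounded subsequence would yield a cluster point in $S^*$). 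For that special case your proof is arguably tighter than the paper's, since the paper's Brouwer step is stated without verifying that the map sends $\Xi$ into itself.

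The genuine gap is that the theorem concerns Algorithm~\ref{Alg2} --- inexact steps controlled by \eqref{ie:crita1}--\eqref{al}, general symmetric positive definite $G$, and iterate-dependent $\rho(x^k)$ --- and for that case you do not have a proof; you yourself flag the reconciliation of the two regimes as ``the delicate point.'' Neither of your proposed repairs closes it. The perturbed-asymptotic-regularity route needs the perturbations to be summable, but \eqref{al} only caps $\alpha_k$ by a quantity bounded below by a positive constant (since $\lambda_{\min}(G)\le\rho(x^k)\le\lambda_{\max}(G)$), so the allowed errors $\|\Theta_k(x^{k+1})\|\le\alpha_k\|e(x^k)\|$ need not even vanish in the regime $\inf_x\|e(x)\|>0$; moreover, with $\rho(x^k)$ varying, the iteration is not the Picard orbit of a single averaged map, so the Baillon--Bruck--Reich theorem does not apply as stated. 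The Fej\'er route fails more fundamentally: Theorem~\ref{thm:wcov} is anchored at an exact solution $x^*\in S^*$ because its proof rests on \eqref{ie:contr} of Theorem~\ref{thm:contrac}, which holds only at solutions; when $S^*=\emptyset$ there is no admissible anchor, and replacing $x^*$ by a near-minimizer $z$ of $\|e(\cdot)\|$ introduces cross terms of size $\|e(z)\|\,\|x^k-z\|$ that cannot be controlled without the very boundedness you are trying to refute. So, as written, the proposal proves the statement only for the exact iteration with $G=I$, not for Algorithm~\ref{Alg2}.
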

\begin{proof}
We prove the assertion by contradiction. Suppose that on the contrary, the sequence $\{x^k\}$ generated by Algorithm~\ref{Alg2} is bounded. Then there is some $M>0$ such that $\|x^k\|\le M$. Define
$$
\Xi:=\{x\in\R^n\;|\; \|x\|\le M+1\},$$
and consider the constrained { AVE } problem of finding $x\in\Xi$ such that
\begin{equation}\label{cave}
    Ax-|x|-b=0.
\end{equation}
Using the identity $|x|=2\max\{0,x\}-x$, the problem can be reformulated to the problem of finding
$x\in\Xi$ such that
{ $$
x=2\max\{0,x\}-Ax+b,$$ }
i.e., finding a fixed point of the continuous mapping {  $h(x):=2\max\{0,x\}-Ax+b$ } on the compact set $\Xi$.
Using the  celebrated Brouwer fixed-point theorem for a continuous map, we conclude that the constrained AVE problem
\eqref{cave} possesses a solution. Now consider Algorithm \ref{Alg2} applying to the constrained AVE problem
\eqref{cave}, the procedure should be the same except the constrained that $x\in\Xi$. More precisely, at iteration $k$, the next iteration is generated by finding $x^{k+1}\in\Xi$, such that
\begin{equation*}%\label{crita1}
\left\| \Theta_k(x^{k+1})\right\| \le \alpha_k \left\| e(x^k)\right\|,
\end{equation*}
where $\Theta_k(x)$ is defined as in \eqref{eq:Theta} and $\rho(x)$ is defined as in \eqref{eq:rho}, and
$\alpha_k$ is { determined } by \eqref{al}.  From Theorem \ref{thm:cov}, $\{x^k\}$ converges to a solution of the constrained
AVE \eqref{cave}, which should be a solution of the AVE \eqref{eq:ave}, a contradiction to the assumption that the AVE \eqref{eq:ave} does not have a solution. This completes the proof.
\end{proof}

We now borrow a trivial example from \cite{mang2009} to illustrate the behavior of the proposed algorithm when
the AVE \eqref{eq:ave} does not have a solution. Consider the AVE in $R^1$: $x-|x|-1=0$. This AVE has no solution and satisfies $\|A^{-1}\|=1$. For this example, according to~\eqref{eq:iterI}, $x^{k+1}=x^k+\frac{1}{2}\gamma (|x^k|-x^k)+\frac{1}{2}\gamma=\cdots = x^0+\frac{1}{2}\gamma (|x^0|-x^0)+\frac{k+1}{2}\gamma \rightarrow +\infty$ as $k\rightarrow +\infty$, for any $\gamma\in (0,2)$.

%\begin{rem}
%If $ \|A^{-1}\|<1$, the GLCP~\eqref{eq:glcp} is strongly monotone and the inexact criterion
%$$
%\left\|\Theta_k(x^{k+1})\right\|^2\le\mu_k\left(x^k-x^{k+1}\right)^T(A^TA-I)\left(x^k-x^{k+1}\right)
%$$
%with $\sup \mu_k <2-\gamma$ can be used. In addition, when $G=I$, the results in \cite{han2007} are directly available. Moreover, when $ \|A^{-1}\|\le1$, the inexact criterion \eqref{ie:crita1} with the assumption $\sum\limits_{k=0}^{\infty}\alpha_k^2<+\infty$ can also guarantee the convergence of the inexact method \cite{he1999}.
%\end{rem}

\section{Numerical results}\label{sec:numer}
In this section, we apply the proposed algorithms to solving some concrete examples. We compare the new algorithms with some existing algorithms. The purpose here is to show that, on one hand, the new algorithms have the largest range of convergence, i.e., besides the condition that $\|A^{-1}\|\le 1$ and the solution set is nonempty, there is no other requirements; On the other hand and more importantly, { the new algorithms are much more efficient than those existing.}

As we have reviewed in the first section, recently, there has been a surge of interest in solving the AVE~\eqref{eq:ave} with $\|A^{-1}\|<1$ and a large number of numerical iteration methods have been proposed, including the generalized Newton method \cite{mang2009,lilw2018}, the smoothing Newton method\cite{caqz2011}, the inexact semi-smooth Newton method \cite{crfp2016}, the Levenberg-Marquardt method \cite{iqia2015}, the SOR-like iteration method \cite{kema2017,guwl2019,chyh2020}, the fixed point iteration method \cite{ke2020} and others; see \cite{edhs2017,abhm2018,mang2007,maer2018,maee2017,miys2017,noin2012,
sayc2018,wacl2017,mang2007a,guhl2017} and references therein. We describe three of them in a little detail, since they will be tested in our numerical experiments.

\begin{alg}[\!\!\cite{mang2009}]\label{alg:newton}{\bf $($Exact semi-smooth Newton method$)$} Given an
            initial guess $x^0\in \mathbb{R}^n$, the iteration sequence $\{x^k\}$ is generated by
           \begin{equation}\label{eq:newton}
               \left[A-D(x^k)\right]x^{k+1} = b
            \end{equation}
             for $k = 0,\,1,\,\cdots$ until convergence, where
\end{alg}

\begin{alg}[\!\!\cite{crfp2016}]\label{alg:ienewton}{\bf $($Inexact semi-smooth Newton method$)$} Given an
            initial guess $x^0\in \mathbb{R}^n$, find some step $x^{k+1}$ which satisfies
            \begin{equation*}%\label{eq:innewton}
               \left[A-D(x^k)\right]x^{k+1} = b+r_k,\quad \text{where} \quad \left\|  r_k\right\|
               \le \theta \left\|Ax^k-|x^k|-b\right\|
            \end{equation*}
             for $k = 0,\,1,\,\cdots$ until convergence. Here, $\theta \ge 0$ is the residual
             relative error tolerance.
\end{alg}

\begin{alg}[\!\!\cite{kema2017}]\label{alg:sorl}{\bf $($SOR-like iteration method$)$} Given initial
           guesses $x^0,\, y^0 \in \mathbb{R}^n$,  for $k = 0,\,1,\,\cdots$ until
           convergence, compute
           \begin{align}\label{eq:sorl}
                \left\{\begin{array}{l}
                   x^{k+1}=(1-\omega)x^{k}+\omega A^{-1}(y^{k}+b),\\
                   y^{k+1}=(1-\omega)y^{k}+\omega |x^{k+1}|,
                   \end{array}\right.
           \end{align}
           where $\omega>0$ is the iteration parameter.
\end{alg}

Note that in all these algorithms, the main task per iteration is solving a system of linear equations. The main difference between the Newton-type methods and the SOR-like algorithm, as well as  the algorithms proposed in this paper, is that the coefficient matrices: In Newton-type methods, the matrix is $A-D(x^k)$, which is varying along with the iteration; while in the SOR-like method and Algorithms \ref{Alg1}-\ref{Alg2},  the matrix is $A$ and $GA$, respectively, a fixed one.

Five algorithms will be tested and we now list the parameters utilized in them during our experiments.

\begin{enumerate}

 \item DRs: the exact Douglas-Rachford splitting method, i.e.,
          Algorithm~\ref{Alg1} with $G = I$ and $\gamma = 1.98$.

  \item InexactDRs: the inexact Douglas-Rachford splitting method, i.e.,
          Algorithm \ref{Alg2} with $G=I$ and $\gamma = 1.98$. The inexact criterion
          \eqref{ie:crita1} is used with {  $\alpha_k = \min\left\{1,\frac{1}{\max\{1,k-k_{\max}\}}\right\}$}
          and $k_{\max}=10$.

  \item Newton: the exact semi-smooth Newton method \cite{mang2009}, i.e.,
           Algorithm~\ref{alg:newton}.

  \item InexactNewton: the inexact semi-smooth Newton method proposed in
           \cite{crfp2016}, i.e., Algorithm~\ref{alg:ienewton}. Since we are interested in
           testing methods which have theoretical guarantee, in the following, we test
           Newton and InexactNewton methods only in the case that
           $\|A^{-1}\|<\frac{1}{3}$ is imposed (see Example~\ref{ex:exam1}). In addition, as defined in \cite{crfp2016},
           \begin{equation}\label{eq:theracr}
              \theta =0.9999\cdot \frac{1-3\|A^{-1}\|}{\|A^{-1}\|(\|A\|+3)}
          \end{equation}
          is used. If $\frac{1}{3}<\|A^{-1}\|<1$, this $\theta$ is negative and it can not be used
          anymore and this is another reason why we will not test the InexactNewton
          method in the case $\|A^{-1}\|<1$ (see Example~\ref{ex:exam2}). In addition, in the following Example~\ref{ex:exam1}, since the singular values of $A$ are known, the $\theta$ defined as in \eqref{eq:theracr} can be used with little cost. However, in practice, though the upper bound for $\theta$ in \eqref{eq:theta} $($for the norms $\|\cdot\|_1$ or $\|\cdot\|_{\infty}$$)$ is easy to compute, it will still be expensive  for large $n$ to use $\theta$ defined as in \eqref{eq:theracr}. Inversely, the inexact criterion for the InexactDRs method is easy to implement.

  \item SOR-like: the exact SOR-like iteration method, which was proposed in
           \cite{kema2017} and further studied in \cite{guwl2019,chyh2020}, i.e.,
           Algorithm~\ref{alg:sorl}. If $\|A^{-1}\|\le \frac{1}{4}$, the optimal iteration parameter $\omega^*=1$. Since it will be expensive to compute the optimal iteration parameter when $\frac{1}{4}<\|A^{-1}\|<1$ \cite{chyh2020}, we simply let $\omega = 0.9$ in this paper, except Example~\ref{ex:sorvsdrs}.

\end{enumerate}

From \eqref{eq:newton}, each step of the Newton method requires the exact solution of a linear system with coefficient matrix $A-D(x^k)$. Since $D(x^k)$ may vary with $x^k$, it  simply uses the usual Gaussian elimination with partial pivoting, such as MATLAB's operator ``$\backslash$", to carry out the inversion, with a cost at $O(n^3)$.
According to \eqref{eq:iterI} and \eqref{eq:sorl}, each step of the DRs method and the SOR-like method requires the exact solution of a linear system with constant coefficient matrix $A$. Solving systems $Ax=d^k$ for different $k$ only requires that $A$ be factored once, at a cost of $O(n^3)$ flops for the worst case ($A$ is dense); while per iteration, its cost reduces to $O(n^2)$ flops (matrix-vector products). For such a system of linear equations with fixed coefficient matrix $A$, we can adopt the intrinsic properties of $A$, especially when $A$ is large and sparse, and here we adopt the LU decomposition.  Hence, If there are $K$ iteration steps, the cost is $O(n^3)+KO(n^2)$.  Specifically, we use
 $
 dA ={ \bf decomposition}(A,\text{`lu'})
 $
 to generate the LU decomposition of $A$, where ${\bf decomposition}$ is the routine in MATLAB. $dA ={ \bf decomposition}(A,\text{`lu'})$ returns the LU decomposition of matrix $A$, which can be used to solve the linear system $Ax=b$ efficiently. The call $x=dA\backslash b$ returns the same vector as $A\backslash b$, but is typically faster. Finally, as suggested for the InexactNewton method in \cite{crfp2016}, we  use the LSQR algorithm \cite{pasa1982} as the inner iteration method for the InexactDRs method.

All computations are done in MATLAB R2017b with a machine precision $2.22\times 10^{-16}$ on a personal computer with 2.60GHz central processing unit (Intel Core i7), 16GB memory and MacOS operating system. %The implemented
For all methods, we  stop its running  if $\|e(x^k)\|\le \epsilon$ and $\epsilon =10^{-8}$ during the implementation. We set $x^0 = -100 + 200 \times \textbf{rand}(n,1)$ and $y^0=x^0$ for the SOR-like method, where $\textbf{rand}$ is the MATLAB function which returns a pseudorandom scalar drawn from the standard uniform distribution on the open interval $(0,1)$. We first generate $x^* = -100 + 200 \times \textbf{rand}(n,1)$  and set $b = Ax^*-|x^*|$, { except Example~\ref{ex:sorvsdrs}.}

For comparison, we  plot the performance profile graphics \cite{domo2002}, and the performance measurement is the running CPU time; namely, we use the performance ratio
$$
r_{p,s} = \frac{t_{p,s}}{\min\{t_{p,s}: s\in \mathcal{S}\}},
$$
where $t_{p,s}$ defined as the mean of the CPU time by using method $s$ to solve problem $p$ five times and $\mathcal{S}$ is the set of corresponding solvers. We set $r_{p,s}=r_M=20$ if and only if solver $s$ does not successfully solve problem $p$ within $50$ iterations. In this manner, the performance profile graphics allow us to compare efficiency and robustness of the tested methods \cite{crfp2016}. Indeed, in a performance profile, efficiency and robustness can be accessed on the extreme left (when $\tau = 1$ in our plots) and right of the graphic, respectively. In addition, in this paper, we will focused on dealing only with the relatively well-conditioned matrices $A$ since all methods presented better robustness on this class of problems; see Figure~\ref{fig:condA} for more detail. The same phenomenon is reported in \cite{crfp2016}.
\begin{figure}[t]
{\centering
\begin{tabular}{ccc}
\hspace{-0.3 cm}
\resizebox*{0.50\textwidth}{0.26\textheight}{\includegraphics{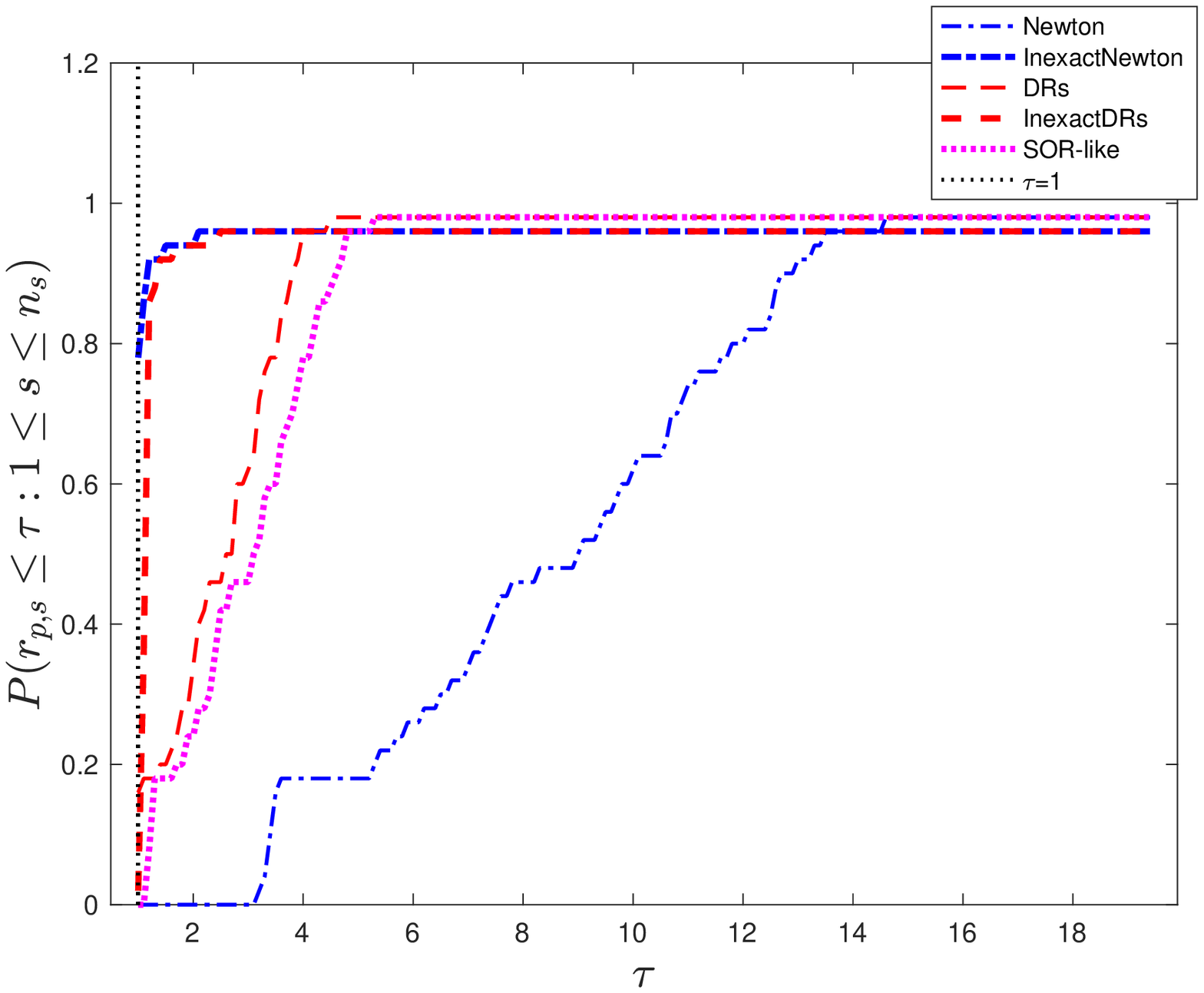}}
& & \hspace{-0.9 cm}
\resizebox*{0.50\textwidth}{0.26\textheight}{\includegraphics{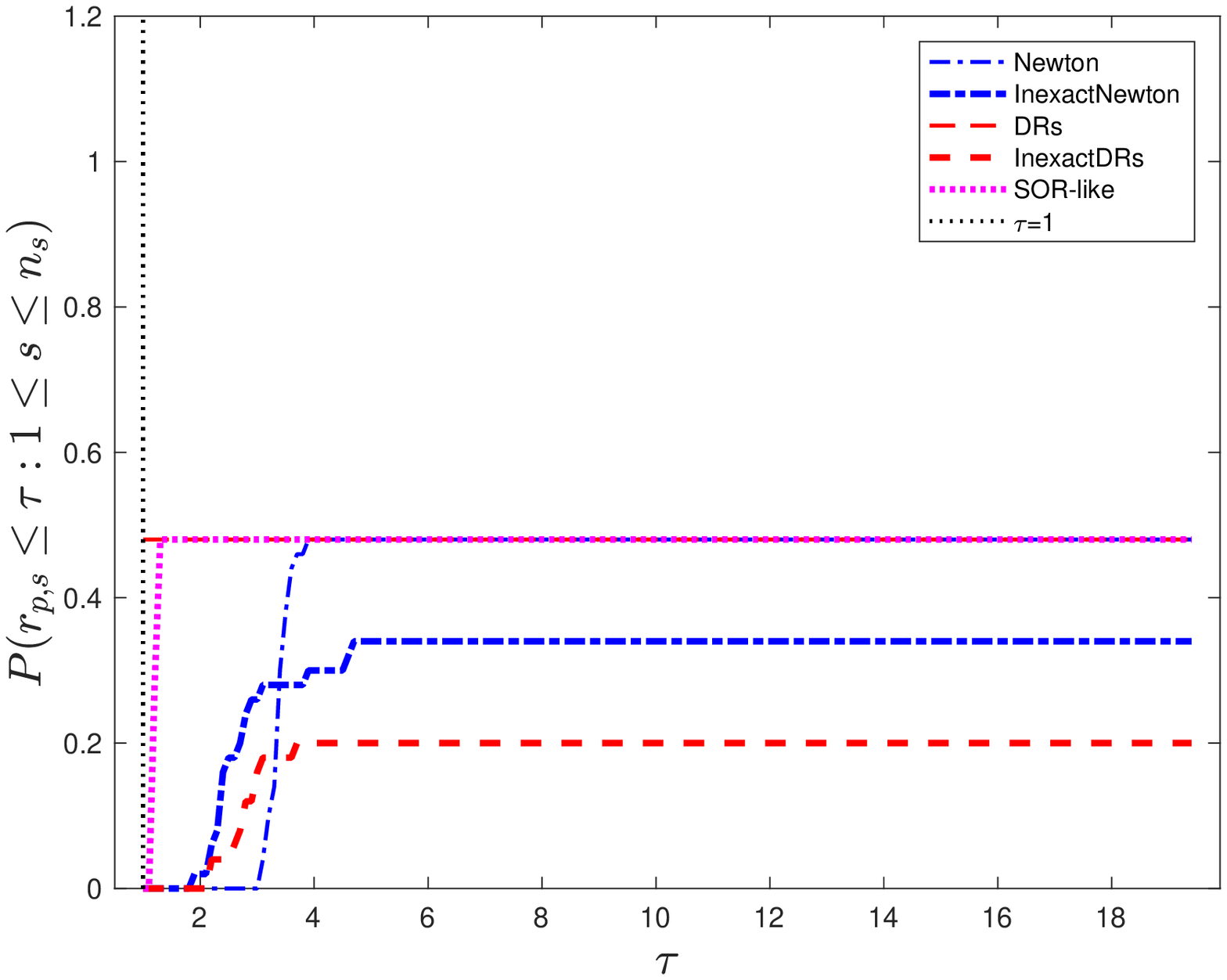}}
\end{tabular}\par
}\vspace{-0.15 cm}
\caption{Performance profile graphics for $50$ AVEs~\eqref{eq:ave} with $n = 10000$ and $\text{density} \approx 0.3$ (left: the average condition number of $A$ is approximately $230.6555$; right: the average condition number of $A$ is approximately $1951.3$).}
\label{fig:condA}
\end{figure}

%%%%%%%%%%%%%%%%%%%%%%%%%%
\begin{exam}[\!\!\cite{guwl2019}]\label{ex:sorvsdrs} Consider the AVE \eqref{eq:ave} with
$$ A = \textbf{tridiag}(-1,8,-1)=\left[\begin{array}{cccccc}
                                     8&-1&0&\cdots&0&0\\
                                     -1&8&-1&\cdots&0&0\\
                                     0&-1&8&\cdots&0&0\\
                                     \vdots&\vdots&\vdots&\ddots&\vdots&\vdots\\
                                     0&0&0&\cdots&8&-1\\
                                     0&0&0&\cdots&-1&8\end{array}\right]\in \mathbb{R}^{n\times n}\quad\text{and}\quad b=Ax^*-|x^*|,$$
where $x^*=[-1,1,-1,1,\cdots,-1,1]^T\in \mathbb{R}^n.$
\end{exam}
For this example, we always have $\|A^{-1}\|=0.1667<0.25$ and $\omega^*=1$ is used. This example is used to compare the performance of the DRs method and the SOR-like method. Numerical results are reported in Tabel~\ref{table-sorvsdrs}, where ``IT", ``CPU'' and ``RES'' denote the number of iterations, the CPU time in seconds, and the residual error defined by $\|e(x^k)\|$, respectively. From Table~\ref{table-sorvsdrs}, we find that each step of the DRs method is less expensive than that of the SOR-like method (the trend is clearer as $n$ becomes larger), which is mainly due to the fact that, even if the cost in updating the auxiliary variable $y$ is not too high, it takes some weights in the total cost, especially when the cost in updating the original variable $x$ is low.
%%%%%%%%%%%%%%%%%%%%%%%%%%%%%%%%%%%%
\setlength{\tabcolsep}{5.0pt}
\begin{table}[!h]\small
\centering
\caption{Numerical results for Example~\ref{ex:sorvsdrs}.}\label{table-sorvsdrs}
\begin{tabular}{|c|c|ccccc|}\hline
 \multirow{2}{*}{Method}  &  & \multicolumn{5}{|c|}{$n$}  \\ \cline{3-7}
  &     &  $16000$ & $20000$ & $24000$ & $30000$ & $40000$ \vextra\\ \hline\hline
%%%%%%%%%%%%%%%%%%%%%%%%%%%%%%%%%%%%%%%%%%%%%%%%%%%%%%%%%%%%%%%%%%%%%%%%%%%%%%%%%%
\multirow{3}{*}{SOR-like} & IT  & $15$ & $15$ & $15$ & $15$& $15$ \\
                 & CPU & $0.0293$ & $0.0406$ & $0.0485$ & $0.0612$& $0.2019$ \\
           & RES & $2.15\times 10^{-9}$ & $2.41\times 10^{-9}$  & $2.63\times 10^{-9}$ &  $2.92\times 10^{-9}$& $3.39\times 10^{-9}$\\
 \multirow{3}{*}{DRs} & IT  & $15$ & $15$ & $15$ & $15$& $15$\\
                 & CPU & $\textbf{0.0251}$ & $\textbf{0.0347}$ & $\textbf{0.0406}$ & $\textbf{0.0489}$& $\textbf{0.0647}$\\
            & RES &$5.53\times 10^{-9}$ & $6.21\times 10^{-9}$ & $6.76\times 10^{-9}$ & $7.52\times 10^{-9}$& $8.71\times 10^{-9}$\\\hline
\end{tabular}
\end{table}
%%%%%%%%%%%%%%%%%%%%%%%%%%%%%%%%%%%%

\begin{exam}\label{ex:exam1}
Consider the AVEs \eqref{eq:ave} with $n=10000$, $n = 3000$ and $n = 1500$, respectively, and for each $n$ we generate $200$ AVEs according to the way proposed in \cite{crfp2016}. For all these AVEs, we have $\|A^{-1}\|<\frac{1}{3}$ and thus the Newton and InexactNewton methods are tested in this example. When $n = 10000$, the mean density of $A$ approximately equal to $0.004$ and the average condition number of $A$ is approximately $352.6557$ $($the smallest and largest value is $51.1698$ and $1871.4$, respectively$)$. When $n = 3000$, two sets of $200$ AVEs are generated, the density of the matrices $A$ in the first set is approximately equal to $0.1$ while it is $0.4$ for the second one. In addition, the average condition number of $A$ is approximately $29.5422$ $($the smallest and largest value is $10.6772$ and $113.7403$, respectively$)$ for the first set and $44.5766$ $($the smallest and largest value is $1.7661$ and $281.1603$, respectively$)$ for the second set. When $n = 1500$, two sets of $200$ AVEs are generated, the density of the matrices $A$ in the first set is approximately equal to $0.8$ while it is approximately equal to $1$ for the second one. In addition, the average condition number of $A$ is approximately $16.9338$ $($the smallest and largest value is $5.3346$ and $69.4724$, respectively$)$ for the first set and $16.6732$ $($the smallest and largest value is $5.3193$ and $69.5532$, respectively$)$ for the second set.
\end{exam}

Numerical results are reported in Table~\ref{table-exam1} and Figure~\ref{fig:fig-for-exam1}. From Table~\ref{table-exam1}, we can find that the InexactNewton method has the most wins when $n = 10000$ and $n = 3000$ and that the probability that the InexactNewton method is the winner  is about $0.665$, $0.885$ and $0.485$, respectively, for $n=10000$, $n=3000$ with $\text{density} \approx 0.1$ and $n=3000$ with $\text{density} \approx 0.4$. In addition, when $n=1500$, the Newton method has the most wins for solving the tested problems and that the probability that the Newton method is the winner  is about $0.815$ and $0.74$, respectively, for $\text{density} \approx 0.8$ and $\text{density} \approx 1$. Moreover, the DRs method is superior to the SOR-like method and the Newton method in terms of CPU time when $n=10000$ and $n=3000$. When $n=10000$, the DRs method and the SOR-like method have the highest robustness ($88\%$) while the InexactDRs method has the lowest robustness ($82\%$). When $n = 3000$, the robustness of all methods is considerably high for this example (the range is from $97\%$ to $100\%$) and the two inexact methods perform very well. When $n=1500$, the Newton method has the lowest robustness while the two inexact methods have the highest robustness. Figure~\ref{fig:fig-for-exam1} clearly shows the performance for all methods. In particular, when $n = 10000$, if we choose being within a factor of $0.2$ of the best solver as the scope of our interest, then the InexactDRs method would suffice and the DRs method and the SOR-like method become much more competitive if we extend our $\tau$ of interest to $7$ (more clearly, see the top-right of Figure~\ref{fig:fig-for-exam1}).

%%%%%%%%%%%%%% Table 1 %%%%%%%%%%%%%%%%%%%%%
\setlength{\tabcolsep}{5.0pt}
\begin{table}[!h]\small
\centering
\caption{Numerical results for Example~\ref{ex:exam1}.}\label{table-exam1}
\begin{tabular}{|l|l|lllll|}\hline
 \multirow{3}{*}{Method}  &  & \multicolumn{5}{|c|}{$n$}  \\ \cline{3-7}
     &   & \multicolumn{1}{l}{$10000$} & \multicolumn{2}{l}{$3000$} & \multicolumn{2}{l|}{$1500$} \\\cline{2-3}\cline{4-5}\cline{6-7}
     &   Density($\%$) &  $0.38$ & $9.51$ & $38.01$ & $76.02$ &  $95.01$\vextra\\ \hline\hline
%%%%%%%%%%%%%%%%%%%%%%%%%%%%%%%%%%%%%%%%%%%%%%%%%%%%%%%%%%%%%%%%%%%%%%%%%%%%%%%%%%
\multirow{2}{*}{Newton} & Efficiency($\%$)  & $0$ & $0$ & $0$ & $\textbf{81.5}$  & $\textbf{74}$\\
                 & Robustness($\%$) & $86.5$ & $97$ & $97$ & $81.5$ &$74$\\\hline
\multirow{2}{*}{InexactNewton} & Efficiency($\%$)  & $\textbf{66.5}$ & $\textbf{85.5}$ & $\textbf{48.5}$ & $4.5$ &$0.5$\\
                 & Robustness($\%$) & $87.5$ & $\textbf{99}$ & $\textbf{100}$ & $\textbf{99.5}$ &$\textbf{100}$\\\hline
\multirow{2}{*}{DRs} & Efficiency($\%$)  & $19$ & $12.5$ & $47.5$ & $7$ &$23.5$ \\
                 & Robustness($\%$) & $\textbf{88}$ & $97$ & $97.5$ & $98$ &$\textbf{100}$\\\hline
\multirow{2}{*}{InexactDRs} & Efficiency($\%$)  & $2.5$ & $1$ & $3$ & $6$ &$2$\\
                 & Robustness($\%$) & $82$ & $\textbf{99}$ & $\textbf{100}$ & $\textbf{99.5}$ &$\textbf{100}$\\\hline
\multirow{2}{*}{SOR-like} & Efficiency($\%$)  & $0.5$ & $0$ & $0$ & $0$ &$0$\\
                 & Robustness($\%$) & $\textbf{88}$ & $97$ & $97.5$ & $98$ &$\textbf{100}$\\\hline
\end{tabular}
\end{table}

%%%%%%%%%%% Figure 1 %%%%%%%%%%%%%%%
\begin{figure}[t]
{\centering
\begin{tabular}{ccc}
\hspace{-0.3 cm}
\resizebox*{0.50\textwidth}{0.26\textheight}{\includegraphics{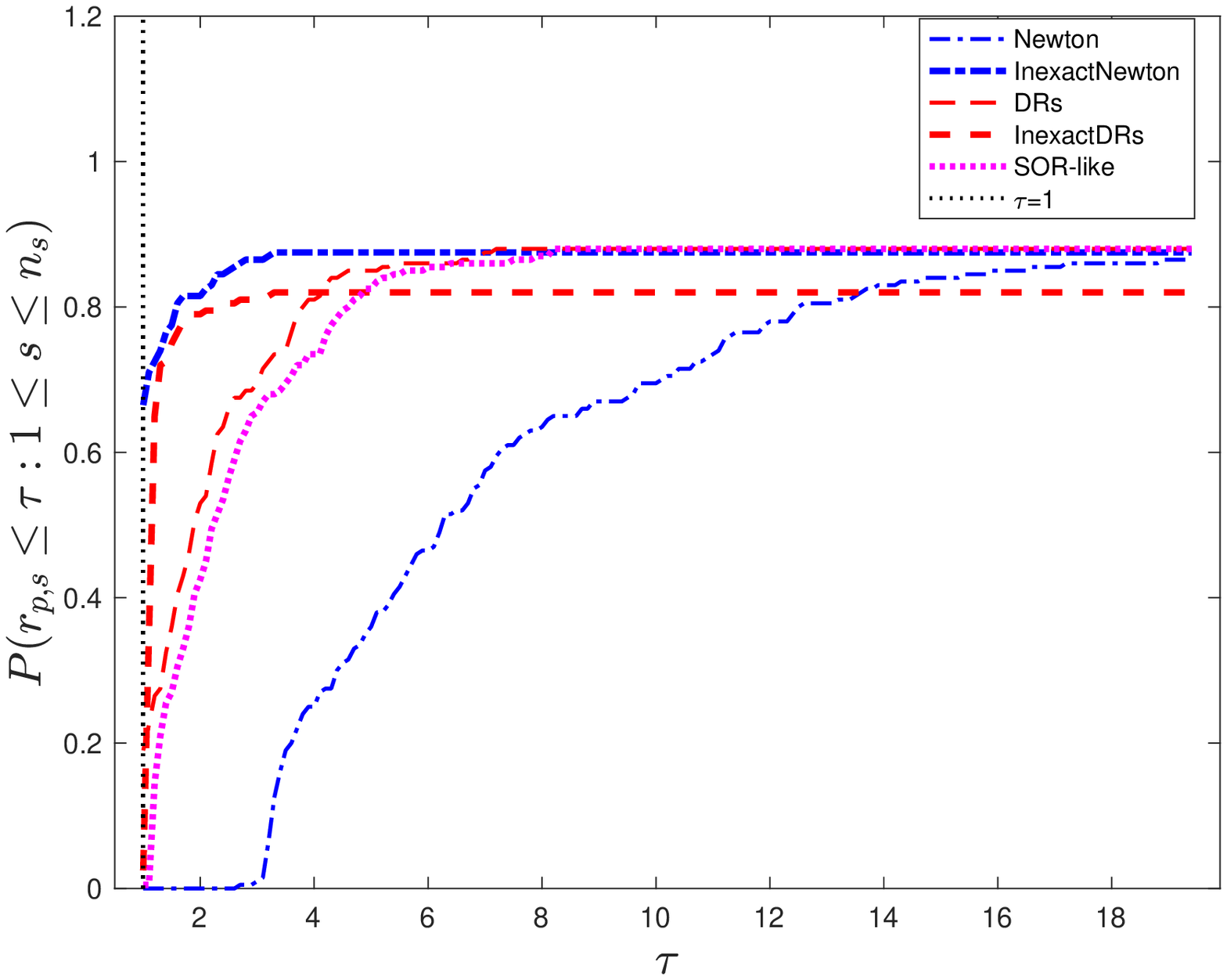}}
& & \hspace{-0.9 cm}
\resizebox*{0.50\textwidth}{0.26\textheight}{\includegraphics{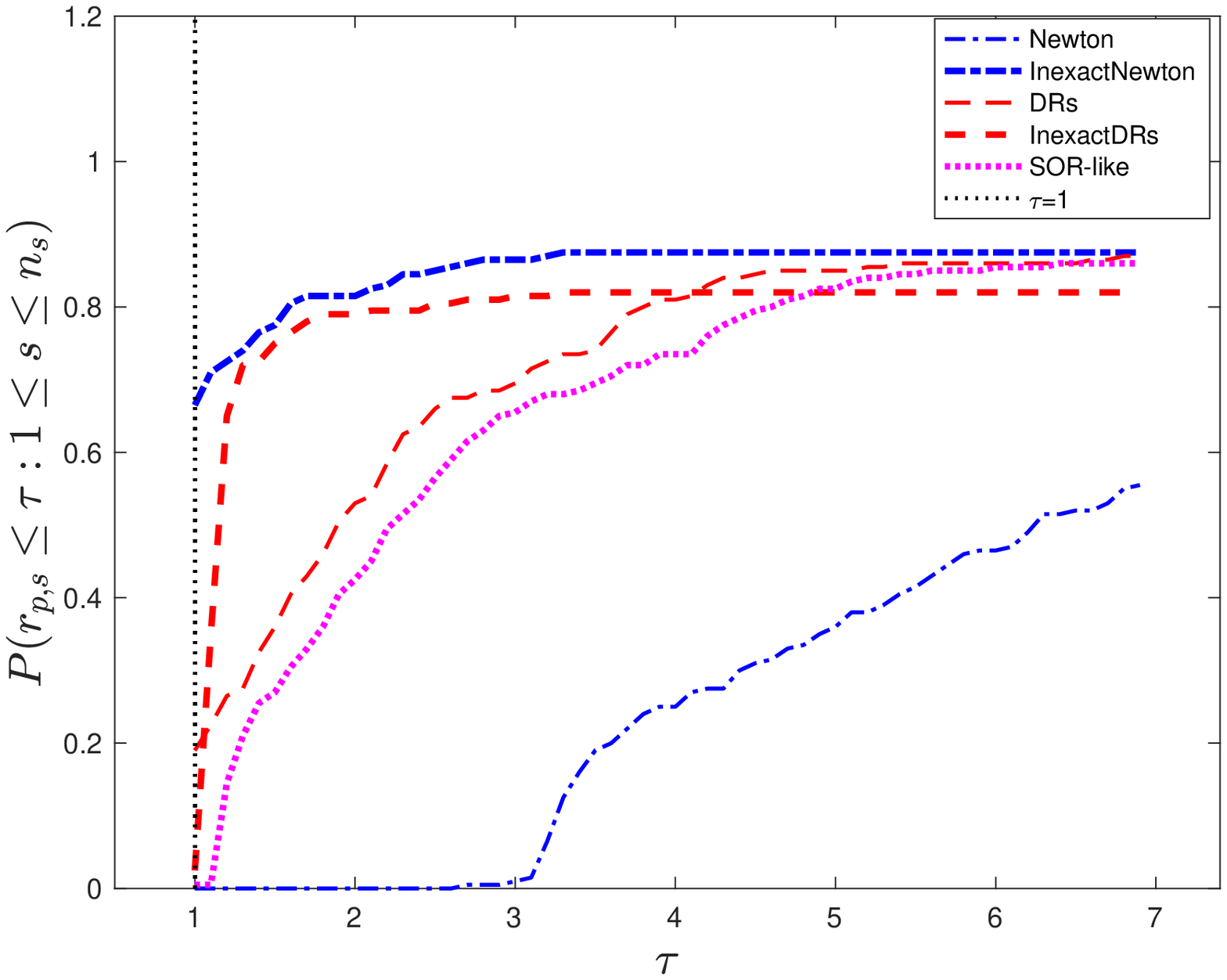}} \vspace{2ex}\\
\hspace{-0.3 cm}
\resizebox*{0.50\textwidth}{0.26\textheight}{\includegraphics{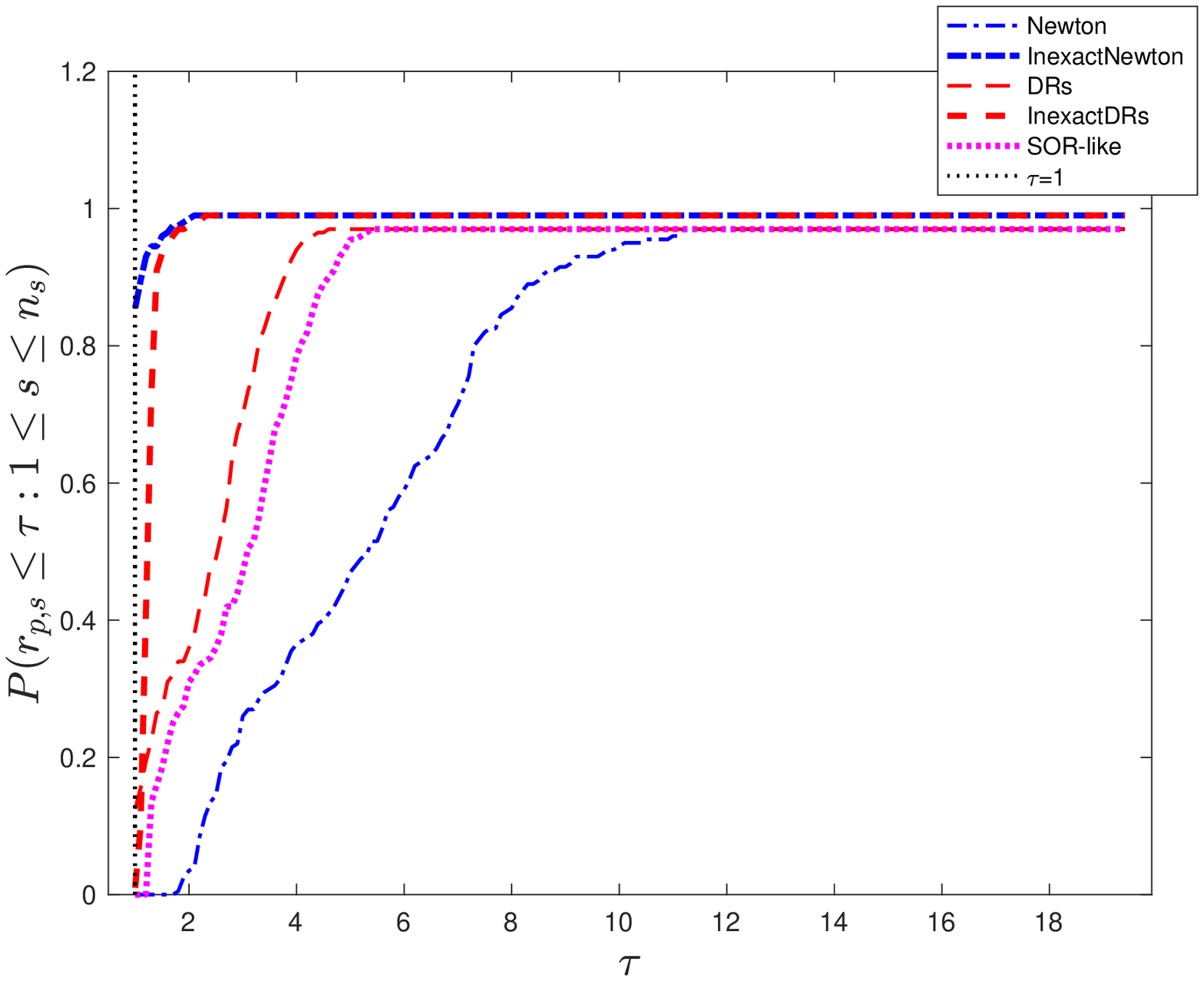}}
& & \hspace{-0.9 cm}
\resizebox*{0.50\textwidth}{0.26\textheight}{\includegraphics{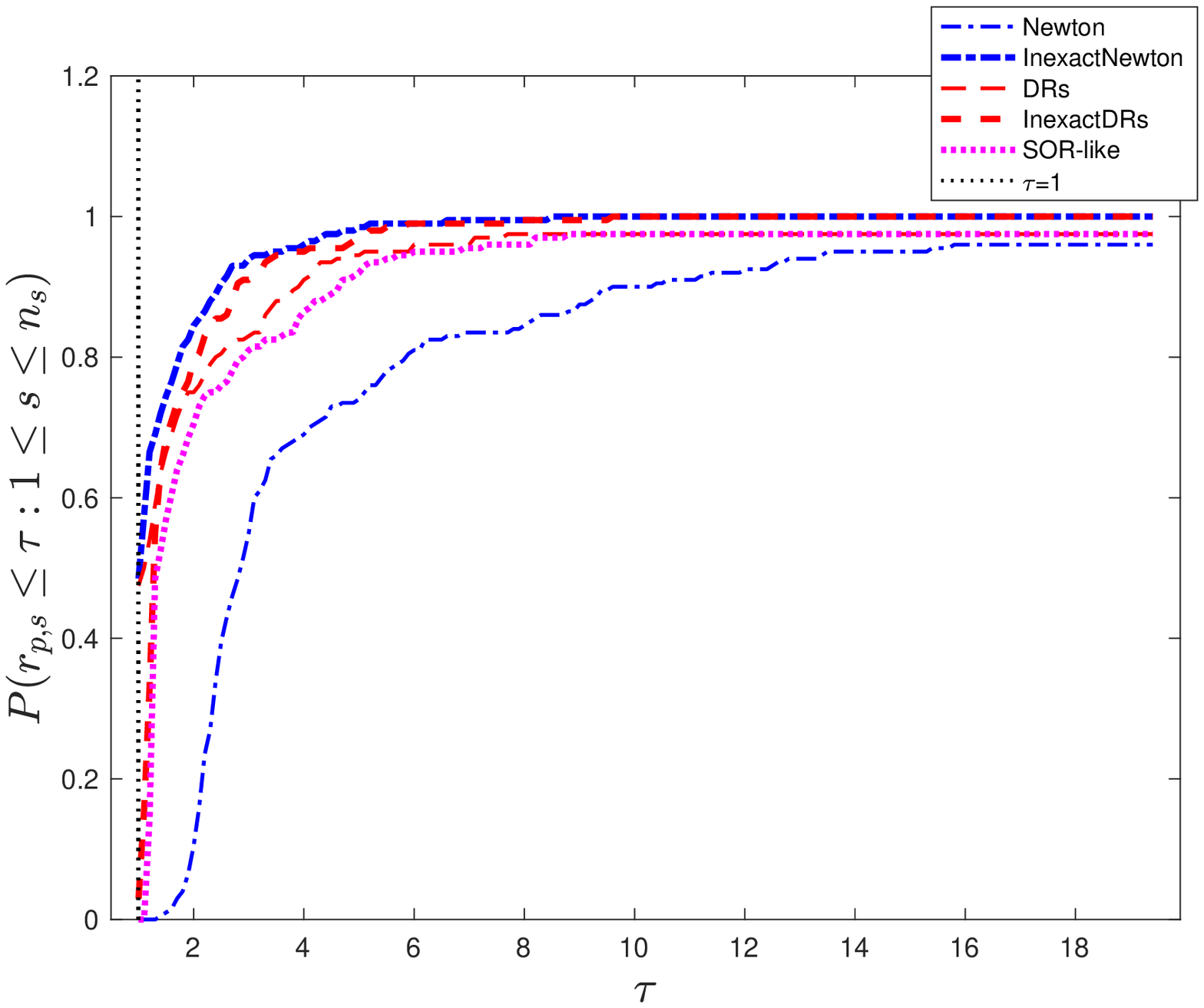}}
\vspace{2ex}\\
\hspace{-0.3 cm}
\resizebox*{0.50\textwidth}{0.26\textheight}{\includegraphics{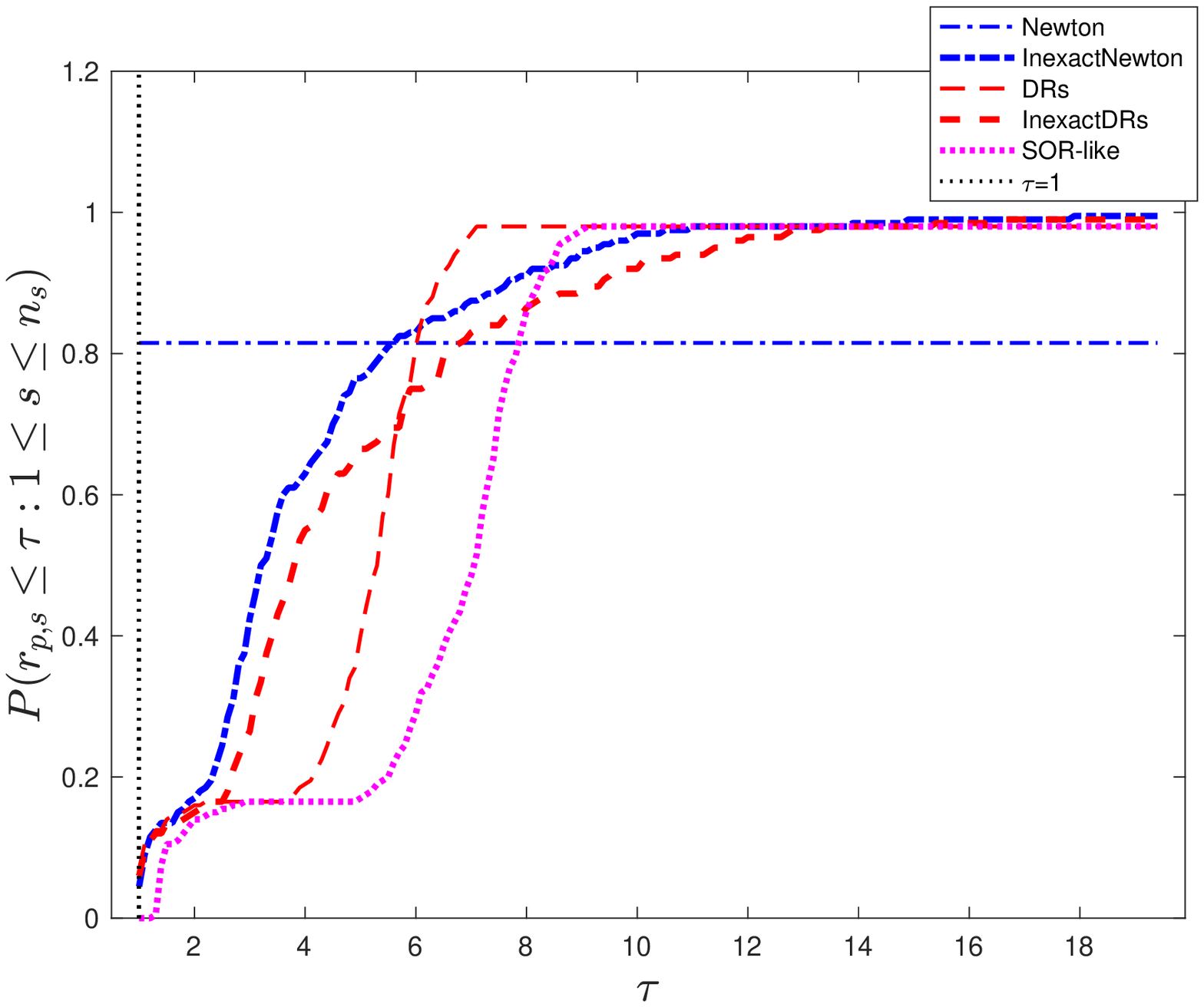}}
& & \hspace{-0.9 cm}
\resizebox*{0.50\textwidth}{0.26\textheight}{\includegraphics{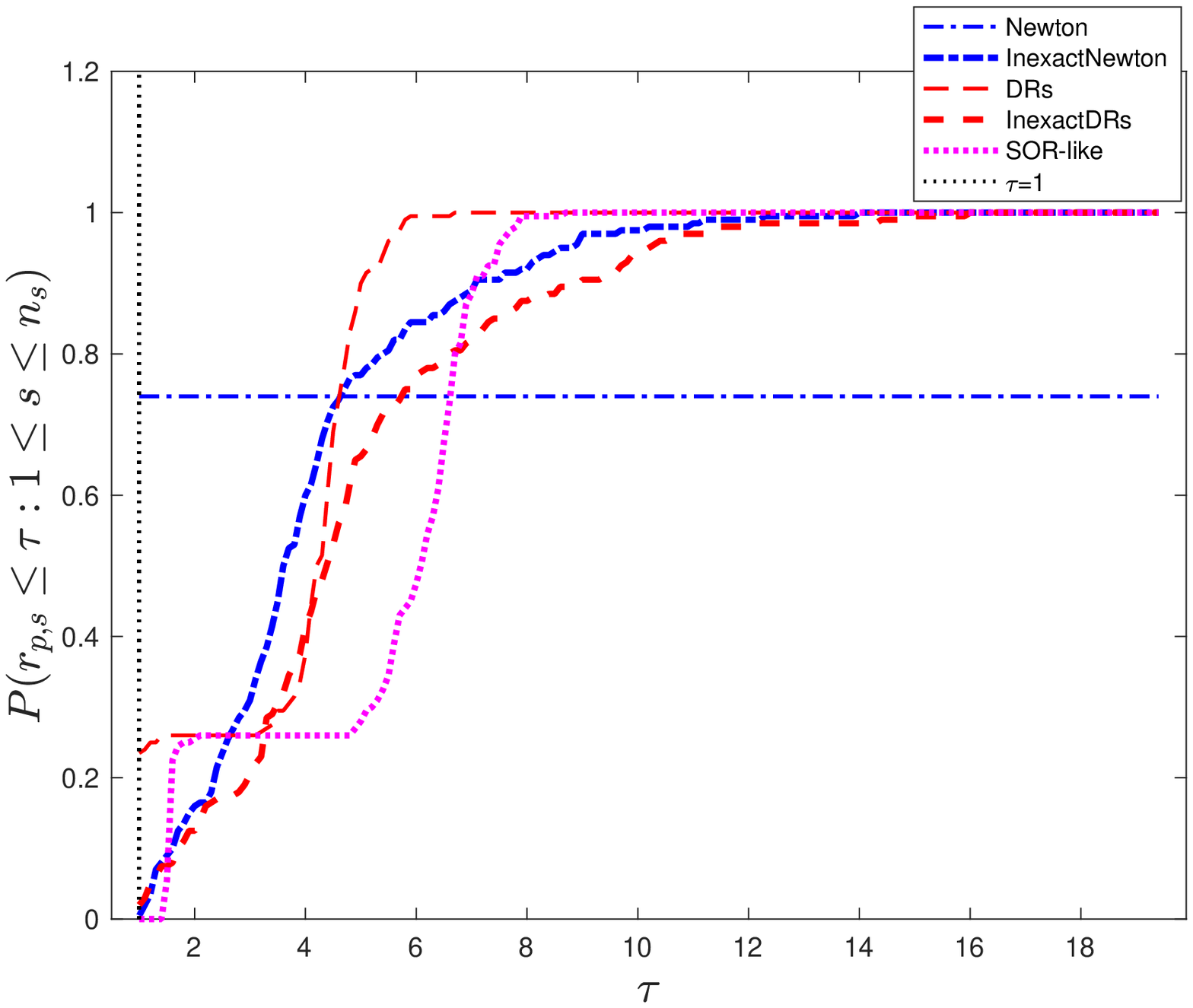}}
\end{tabular}\par
}\vspace{-0.15 cm}
\caption{Performance profile graphics for Example \ref{ex:exam1} with $n = 10000$ (top two plots), $n=3000$ (middle two plots: middle-left, $\text{density} \approx 0.1$; middle-right, $\text{density} \approx 0.4$) and $n = 1500$ (bottom two plots: bottom-left, $\text{density} \approx 0.8$; bottom-right, $\text{density} \approx 1$).}
\label{fig:fig-for-exam1}
\end{figure}
%%%%%%%%%%%%%%%%%%%%%%%%%%%%%%%%%%

\begin{exam}\label{ex:exam2}
Consider the AVEs \eqref{eq:ave} with $n=10000$, $n = 3000$ and $n = 1500$, respectively, and for each $n$ we generate $200$ AVEs with $\|A^{-1}\|<1$ and thus the Newton and InexactNewton methods are not tested in this example. When $n = 10000$, the mean density of $A$ approximately equal to $0.004$ and the average condition number of $A$ is approximately $311.4557$ $($the smallest and largest value are $48.0533$ and $1901.5$, respectively$)$. When $n = 3000$, two sets of $200$ AVEs are generated, the density of the matrices $A$ in the first set is approximately equal to $0.1$ while it is $0.4$ for the second one. In addition, the average condition number of $A$ is approximately $28.8577$ $($the smallest and largest value is $10.3220$ and $105.3327$, respectively$)$ for the first set and $30.8206$ $($the smallest and largest value is $10.8019$ and $116.8940$, respectively$)$ for the second set. When $n = 1500$, two sets of $200$ AVEs are generated, the density of the matrices $A$ in the first set is approximately equal to $0.8$ while it is approximately equal to $1$ for the second one. In addition, the average condition number of $A$ is approximately $23.5348$ $($the smallest and largest value is $8.3057$ and $90.8945$, respectively$)$ for the first set and $16.6792$ $($the smallest and largest value is $5.4160$ and $68.0478$, respectively$)$ for the second set.
\end{exam}

Numerical results are reported in Table~\ref{table-exam2} and Figure~\ref{fig:fig-for-exam2}. From Table~\ref{table-exam2}, we can conclude that the InexactDRs method has the most wins for solving the tested problems with $n = 10000$ and $n = 3000$; and that the probability that the InexactDRs method is the winner  is about $0.77$, $0.835$ and $0.635$, respectively, for $n=10000$, $n=3000$ with $\text{density} \approx 0.1$ and $n=3000$ with $\text{density} \approx 0.4$. In addition, when $n=1500$, the DRs method has the most wins for solving the tested problems and that the probability that the DRs method is the winner  is about $0.575$ and $0.6$, respectively, for $\text{density} \approx 0.8$ and $\text{density} \approx 1$. Moreover, the DRs method is superior to the SOR-like method in terms of CPU time for this example. Finally, the robustness of all methods is considerably high for this example (the range is from $95.5\%$ to $100\%$). Figure~\ref{fig:fig-for-exam2} clearly shows the performance for all methods. In particular, when $n = 10000$, the DRs method and the SOR-like method become more competitive if we extend our $\tau$ of interest to $6$ (more clearly, see the top-right of Figure~\ref{fig:fig-for-exam2}).
%%%%%%%%%%%%%% Table 2 %%%%%%%%%%%%%%%%
\setlength{\tabcolsep}{5.0pt}
\begin{table}[!h]\small
\centering
\caption{Numerical results for Example~\ref{ex:exam2}.}\label{table-exam2}
\begin{tabular}{|l|l|lllll|}\hline
 \multirow{3}{*}{Method}  &  & \multicolumn{5}{|c|}{$n$}  \\ \cline{3-7}
     &   & \multicolumn{1}{l}{$10000$} & \multicolumn{2}{l}{$3000$} & \multicolumn{2}{l|}{$1500$} \\\cline{2-3}\cline{4-5}\cline{6-7}
     &   Density($\%$) &  $0.38$ & $9.51$ & $38.01$  & $76.02$ & $95.01$ \vextra\\ \hline\hline
%%%%%%%%%%%%%%%%%%%%%%%%%%%%%%%%%%%%%%%%%%%%%%%%%%%%%%%%%%%%%%%%%%%%%%%%%%%%%%%%%%
\multirow{2}{*}{DRs} & Efficiency($\%$)  & $20.5$ & $16$ & $36$ & $\textbf{57.5}$ & $\textbf{60}$\\
                 & Robustness($\%$) & $\textbf{97.5}$ & $99.5$ & $99$ & $98.5$ & $100$\\\hline
\multirow{2}{*}{InexactDRs} & Efficiency($\%$)  & $\textbf{77}$ & $\textbf{83.5}$ &$\textbf{63.5}$ & $41.5$ & $40$\\
                 & Robustness($\%$) & $95.5$  & $99.5$ &  $\textbf{99.5}$ & $\textbf{100}$ & $100$\\\hline
\multirow{2}{*}{SOR-like} & Efficiency($\%$)  & $0$ & $0$ &  $0$ & $0.5$ & $0$\\
                 & Robustness($\%$) & $\textbf{97.5}$ & $99.5$ & $99$ & $99$ & $100$\\\hline
\end{tabular}
\end{table}

%%%%%%%%%%%%%%%%%%%% Figure 2 %%%%%%%%%%%%%%%%%%%%%%%%%%%%%%%%%%%%%%
\begin{figure}%[t]
{\centering
\begin{tabular}{ccc}
\hspace{-0.3 cm}
\resizebox*{0.50\textwidth}{0.26\textheight}{\includegraphics{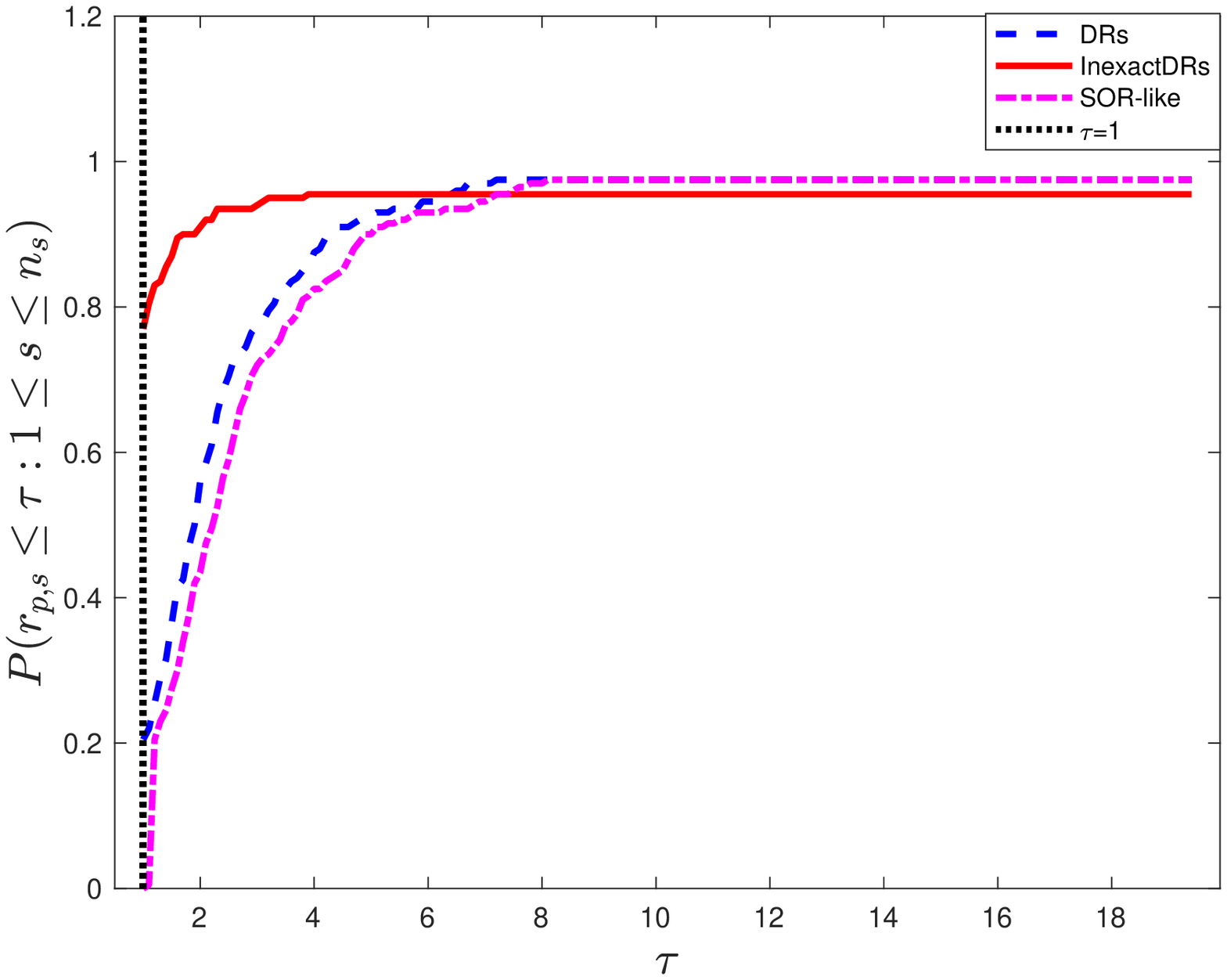}}
& & \hspace{-0.9 cm}
\resizebox*{0.50\textwidth}{0.26\textheight}{\includegraphics{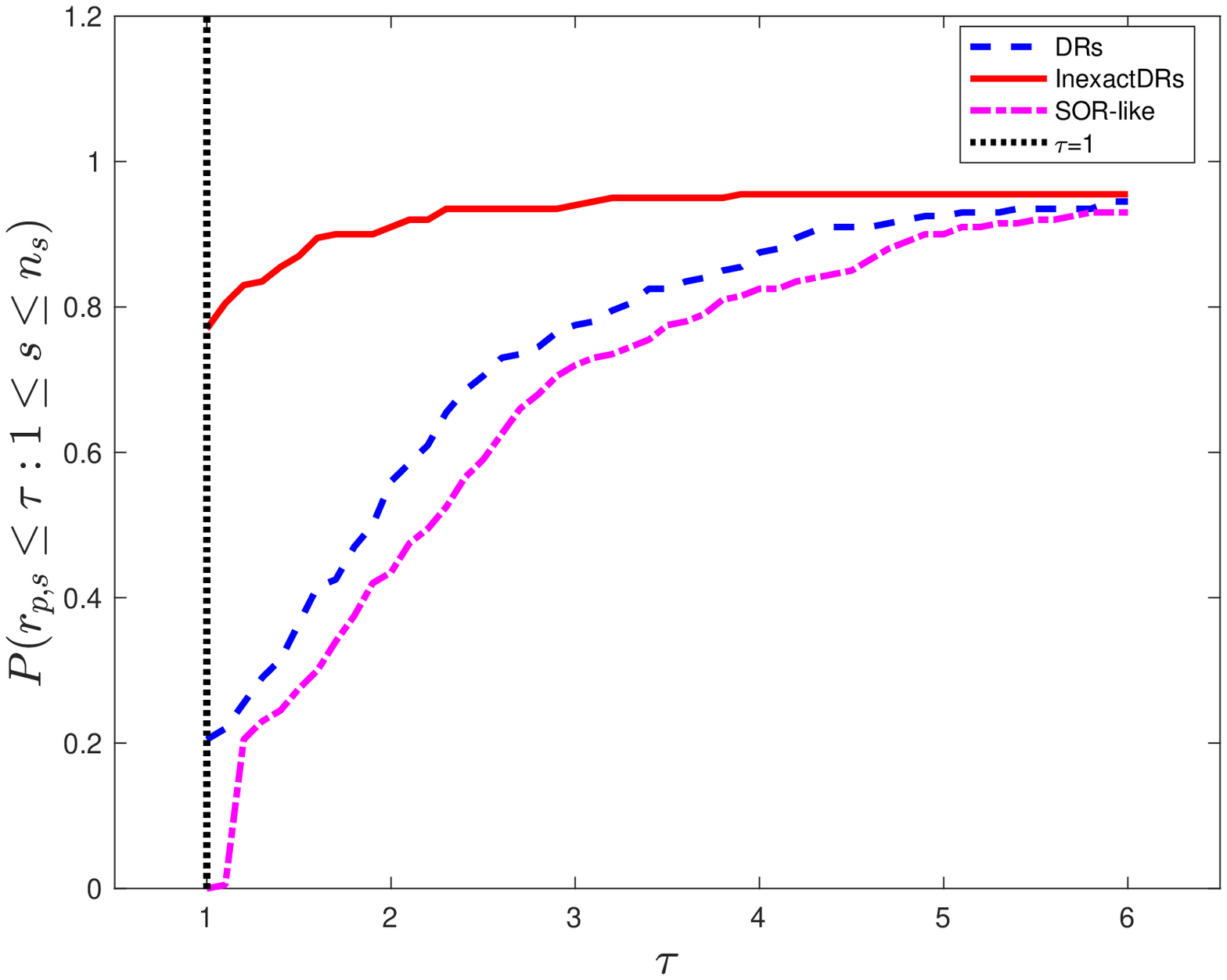}} \vspace{2ex}\\
\hspace{-0.3 cm}
\resizebox*{0.50\textwidth}{0.26\textheight}{\includegraphics{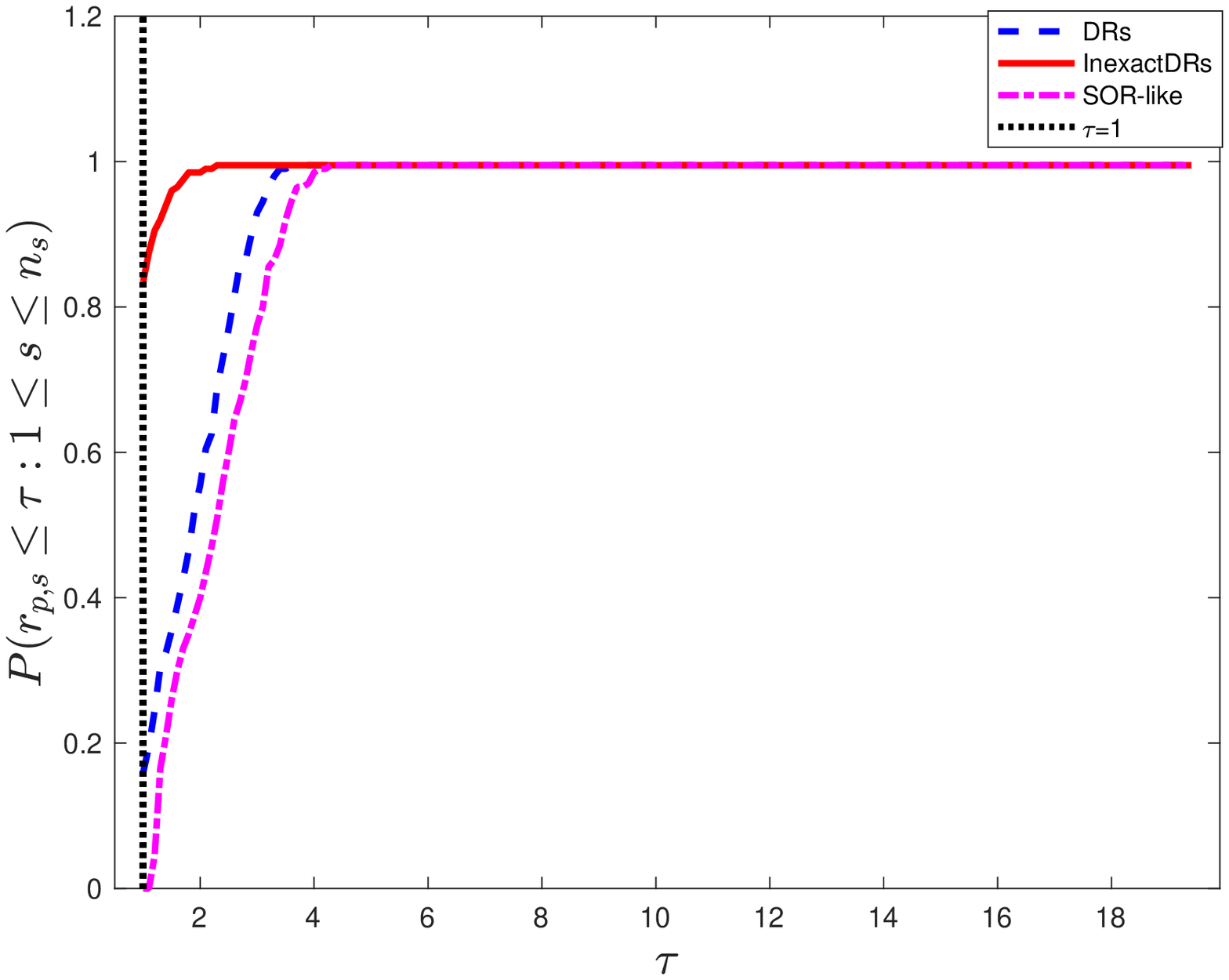}}
& & \hspace{-0.9 cm}
\resizebox*{0.50\textwidth}{0.26\textheight}{\includegraphics{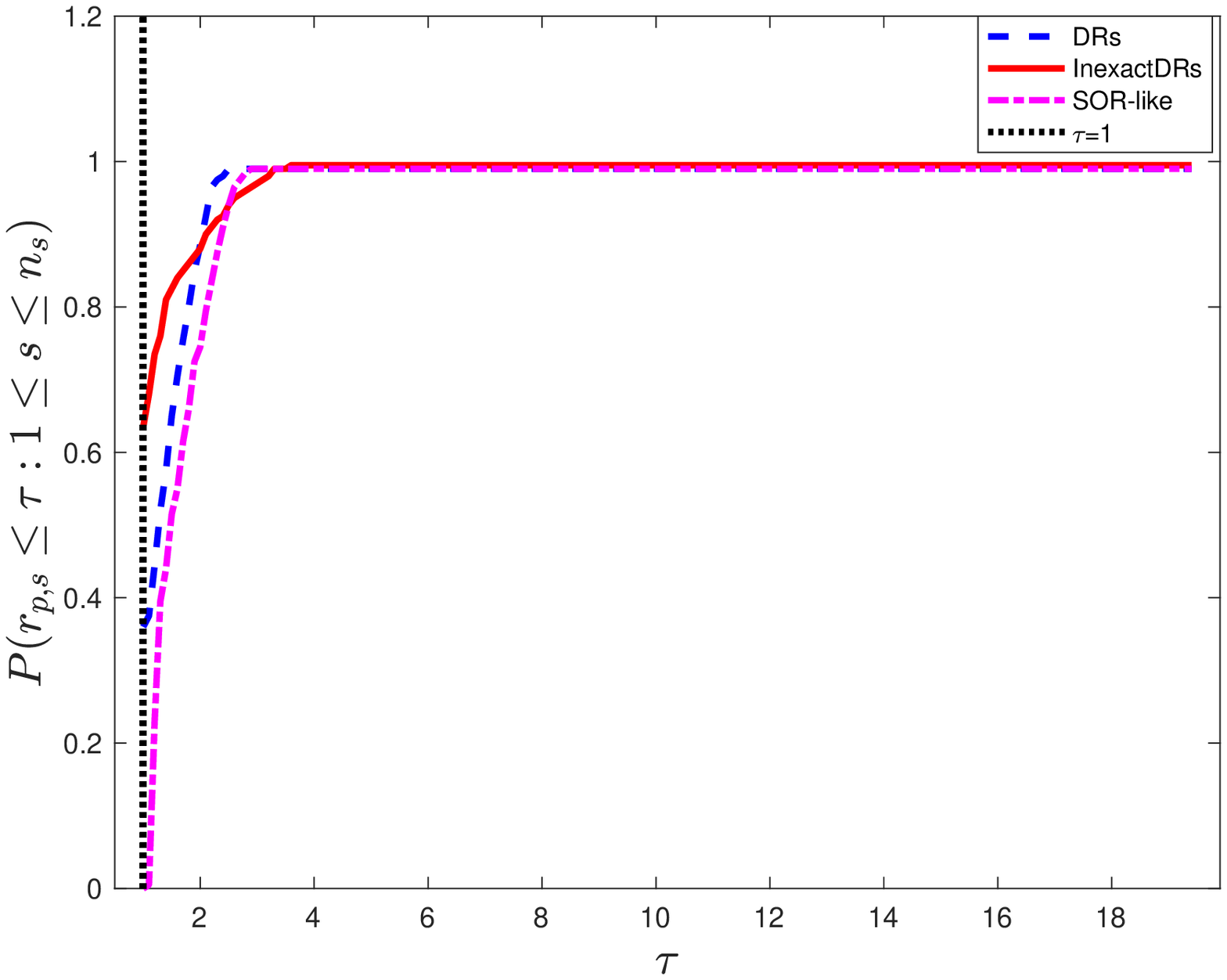}}
\vspace{2ex}\\
\hspace{-0.3 cm}
\resizebox*{0.50\textwidth}{0.26\textheight}{\includegraphics{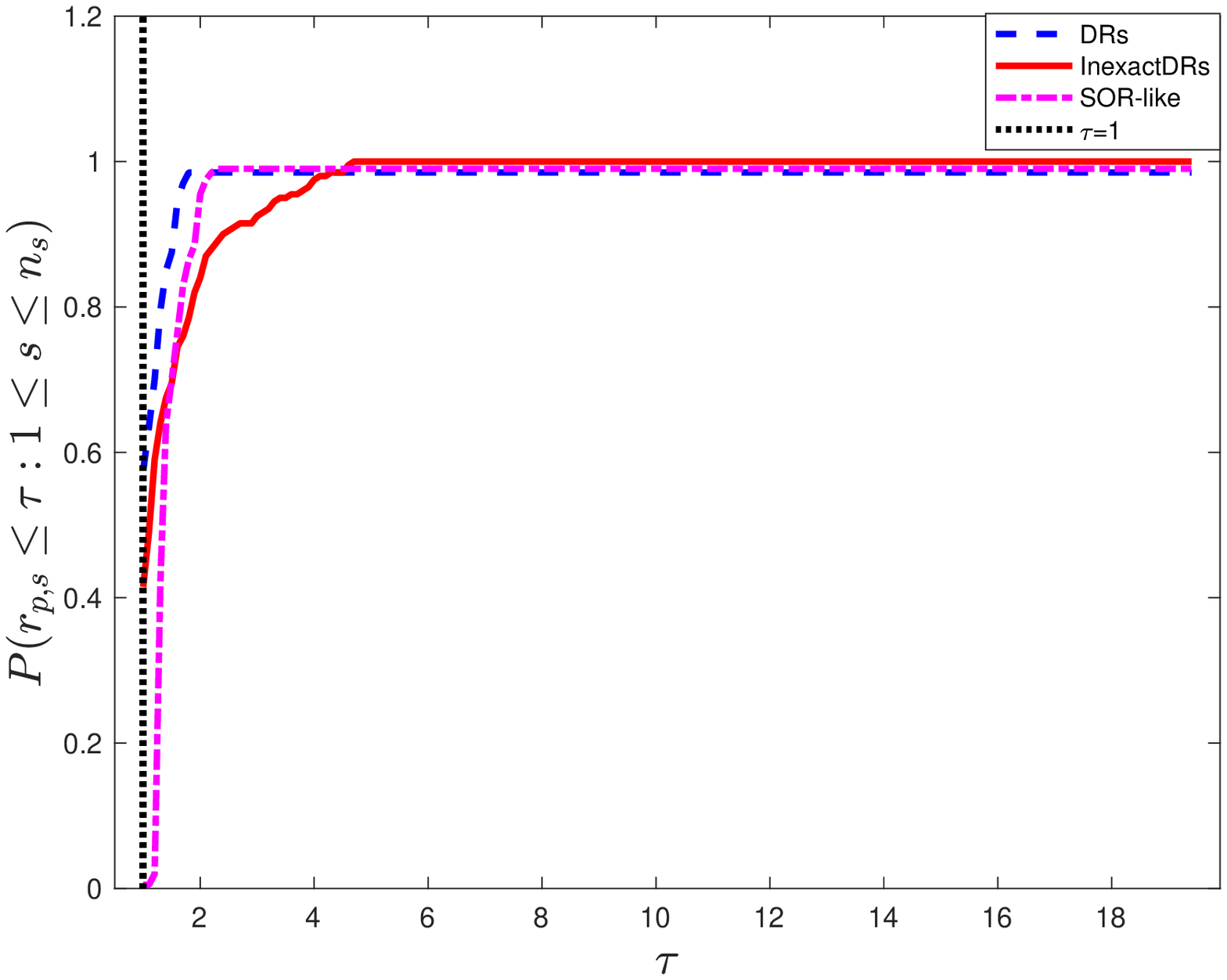}}
& & \hspace{-0.9 cm}
\resizebox*{0.50\textwidth}{0.26\textheight}{\includegraphics{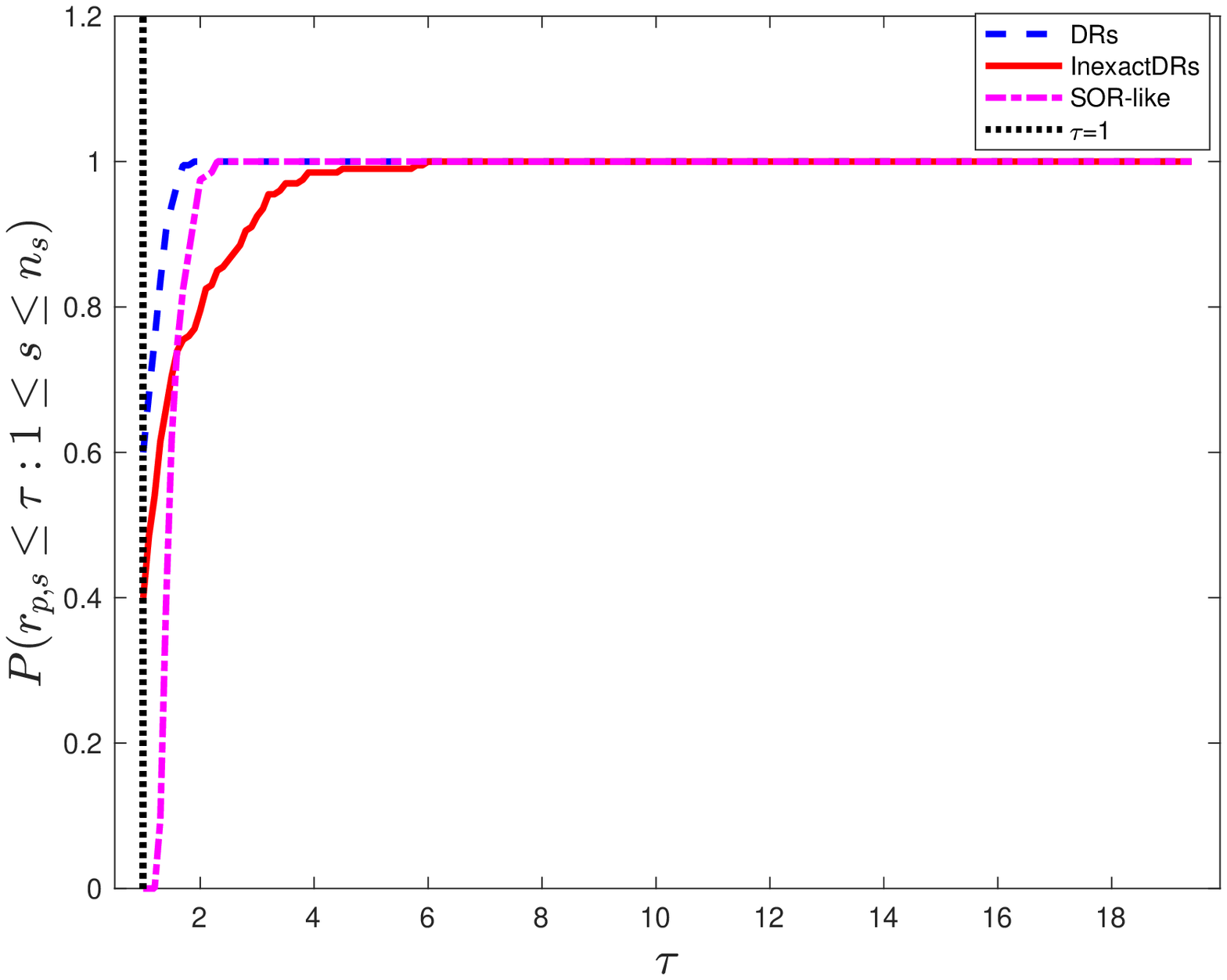}}
\end{tabular}\par
}\vspace{-0.15 cm}
\caption{Performance profile graphics for Example \ref{ex:exam2} with $n = 10000$ (top two plots), $n=3000$ (middle two plots: middle-left, $\text{density} \approx 0.1$; middle-right, $\text{density} \approx 0.4$) and $n = 1500$ (bottom two plots: bottom-left, $\text{density} \approx 0.8$; bottom-right, $\text{density} \approx 1$).}
\label{fig:fig-for-exam2}
\end{figure}

\section{Conclusions}\label{sec:con}
By regarding the absolute value equations (AVE)~\eqref{eq:ave} as a special generalized linear complementarity problem, to solve the AVE~\eqref{eq:ave} is equivalent to finding a zero of the residual function of a nonsmooth projection equation. %From the perspective of Banach fixed-point theorem, a new sufficient condition and a new proof of the sufficient condition $\sigma_{\min}(A)>1$ for the AVE~\eqref{eq:ave} being uniquely solvable for any $b\in\mathbb{R}^n$ are given. In addition,
Utilizing the special structure of the induced linear complementarity problem, we designed a  Douglas-Rachford splitting method, and then extended it to allowing approximate solution of the subproblems. The most impressive feature is that we considered the case that $\|A^{-1}\|\le 1$, instead of the existing literature, which can only be applied to the case $\|A^{-1}\|< 1$; in fact, some of them such as the Newton-type methods can only applied to the case with more restrictions. We proved that
\begin{enumerate}[(i)]
\item If the solution set of the AVE~\eqref{eq:ave} is nonempty, then the algorithm, both the exact version and the inexact version, converges globally to a solution, and the rate of convergence is linear;
\item If the solution set is empty, then the generated sequence diverges to infinity, which numerically provides a manner of checking the existence of solutions for the AVE~\eqref{eq:ave} when the sufficient condition of guaranteeing the uniquely solvable is violated.
\end{enumerate}
%at mosfor solving the AVE~\eqref{eq:ave} with $\|A^{-1}\|\le 1$ are presented. It is shown that both methods are linearly convergent whenever the solution set of the AVE is nonempty.
From the numerical point of view, our methods also have their advantages:
\begin{enumerate}[(i)]
\item Comparing with the Newton-type methods: The coefficient matrix during the iteration is fixed while that of Newton-type methods is varying. This special intrinsic property allows us to deal with the most expensive implementation, the inverse computation, in a `once-for-all' manner. Hence, the cost per iteration reduces to the matrix-vector productions, which is much lower than the inverse computation;
\item Comparing with the SOR-like methods: In SOR-like methods, an auxiliary variable was introduced; hence, the dimension of the system is enlarged to the twice of that the AVE~\eqref{eq:ave}. Although the updating scheme for the variable $x$ is nearly the same as the proposed methods, the updates for the auxiliary variable can not be ignored; this is the case especially where $A$ possesses some structure such that the matrix-vector product in updating $x$ is low cost.
\end{enumerate}
%Our methods are more theoretical guarantee than the Newton method~\cite{mang2009} and the inexact Newton method~\cite{crfp2016}. Three numerical examples are presented to demonstrate the capability of our methods. Especially, the inexact Douglas-Rachford splitting method (with LSQR being inner iteration) is superior to the exact one for solving the AVE~\eqref{eq:ave} with large sparse and well-conditioned $A$ and the exact Douglas-Rachford splitting method with $G=I$ and $\gamma = 1.98$ is superior to the SOR-like iteration method \cite{kema2017} in terms of CPU time. In addition, it should be mentioned that if good preconditioner to the matrix $A$ is cheaply obtainable, we can employ the preconditioned LSQR method instead of LSQR method at each of the inexact DRs iterations so that the computational efficiency of the inexact Douglas-Rachford splitting method may be considerably improved.

We compared these methods numerically and reported the results, which indicate the proposed methods are competitive.

A byproduct during our analysis is a new sufficient condition for the uniquely solvable of the AVE \eqref{eq:ave}. Although it is { stronger than} the existing one, i.e., $\|A^{-1}\|<1$, it puts some lights, especially the new proof, which is much simple.

%\section*{Acknowledgements}
%The authors are grateful to both reviewers for their helpful comments and suggestions. The authors also thank the editors for comments which have helped to improve the paper.

%%%%%%%%%%%%%%%%%%%%%%%%%%参考文献%%%%%%%%%%%%%%%%%%%%%%

%\clearpage

\end{document}